   \newtheorem{theorem}{Theorem}[section]
   \newtheorem{proposition}[theorem]{Proposition}
   \newtheorem{lemma}[theorem]{Lemma}
   \newtheorem{corollary}[theorem]{Corollary}
   \newtheorem{conjecture}[theorem]{Conjecture}
\theoremstyle{definition}
\theoremstyle{remark}
\newcommand{\RR}{{\mathbb{R}}}
\newcommand{\ZZ}{{\mathbb{Z}}}
\newcommand{\go}{\Rightarrow}
\newcommand{\longgo}{\Longrightarrow}
\begin{document}

\title{On Oda's Strong Factorization Conjecture}                                          

\author{Sergio Da Silva}
\address{Mathematics Department\\ University of Toronto, \\
40 St. George Street\\
Toronto, Ontario, Canada M5S 2E4}
\email{sergio.dasilva@utoronto.ca}
\author{Kalle Karu}
\address{Mathematics Department\\ University of British Columbia \\
  1984 Mathematics Road\\
Vancouver, B.C. Canada V6T 1Z2}
\email{karu@math.ubc.ca}
% \date{Dec 1, 2001}
\thanks{This work was partially supported by NSERC USRA and Discovery grants.}

\begin{abstract}
The Oda's Strong Factorization Conjecture states that a proper birational map
between smooth toric varieties can be decomposed as a sequence of
smooth toric blowups followed by a sequence of smooth toric
blowdowns. This article describes an algorithm that conjecturally
constructs such a decomposition. Several reductions and
simplifications of the algorithm are presented and some special cases
of the conjecture are proved.  
\end{abstract}

\maketitle

\section{Introduction}

The general strong factorization problem asks if a proper birational
map between nonsingular varieties (in characteristic zero) can be
factored into a sequence of blowups with nonsingular centers followed
by a sequence of inverses of such maps. Oda \cite{Miyake-Oda} posed
the same problem for toric varieties and toric birational maps. Since
toric varieties are defined by combinatorial data, the conjecture for
toric varieties also takes a combinatorial form.

A nonsingular toric variety is determined by a nonsingular fan and a smooth toric blowup corresponds to a smooth star subdivision of the fan. The conjecture then is:

\begin{conjecture}[Oda]\label{conj-oda} Given two nonsingular fans $\Delta_1$ and $\Delta_2$ with the same support, there exists a third fan $\Delta_3$  that can be reached from both $\Delta_1$ and $\Delta_2$ by sequences of smooth star subdivisions.
\end{conjecture}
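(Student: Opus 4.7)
The plan is to build the common smooth refinement $\Delta_3$ by starting from the coarsest common refinement of $\Delta_1$ and $\Delta_2$ and resolving its singularities in a way that descends from both sides via smooth star subdivisions. First I would form the coarsest common refinement $\Delta_0$, which exists because $\Delta_1$ and $\Delta_2$ share the same support. Every cone of $\Delta_0$ sits inside a unique cone of each $\Delta_i$, so any further refinement of $\Delta_0$ automatically refines both $\Delta_i$. The task thus reduces to finding a smooth refinement $\Delta_3$ of $\Delta_0$ together with, for each $i=1,2$, a sequence of smooth star subdivisions carrying $\Delta_i$ to $\Delta_3$.

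Next I would attempt an induction on a numerical invariant attached to the singular (or nonsimplicial) cones of $\Delta_0$. Natural candidates are the multiplicities of the maximal cones, or a Hilbert--Samuel--type function counting lattice points in fundamental parallelepipeds, lexicographically ordered. The inductive step would select at each stage a ray $v$ that is available as a smooth star subdivision center in \emph{both} $\Delta_1$ and $\Delta_2$ simultaneously, and whose insertion strictly decreases the invariant. Since smooth star subdivisions commute in the interior of cones that they do not touch, one would try to localize the argument to a single maximal cone of $\Delta_0$ at a time.

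The central obstacle is the rigidity of smooth star subdivisions. Recall that a star subdivision of a smooth cone $\sigma = \langle e_1,\ldots,e_n\rangle$ at a ray $v$ yields a smooth fan only when $v = e_{i_1}+\cdots+e_{i_k}$ for some subset of the generators of a face. This is an extremely restrictive condition, and the rays that would naturally resolve the singularities of $\Delta_0$ are typically of this form with respect to at most one of $\Delta_1$ and $\Delta_2$. The hard part will therefore be producing, at each step, a ray (or an ordered short sequence of rays) that is simultaneously available from both sides and provably decreases the chosen invariant.

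One could attempt to bypass this by a birational cobordism construction, realizing $\Delta_1$ and $\Delta_2$ as two GIT quotients of a single $(d{+}1)$-dimensional toric variety carrying a $\GG_m$-action, and resolving the cobordism equivariantly; but the smoothness-constraint on star subdivisions reappears as a descent condition on the resolution. In dimension $\leq 3$ the combinatorics is tractable enough for the direct approach to succeed, and I would verify those as sanity checks. Given that Oda's conjecture remains open in full generality, I expect this paper to deliver an explicit algorithm, a proof in restricted cases, or a reduction of the problem to a cleaner combinatorial statement, rather than a complete resolution.
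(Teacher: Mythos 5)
There is a genuine gap, and in fact the statement you are trying to prove is precisely the open conjecture of the paper: the paper never proves Conjecture~\ref{conj-oda}. Its contribution is an algorithm (algorithm $A$), a reduction showing that finiteness of a local, matrix-level version of that algorithm implies the conjecture (Proposition~\ref{prop-local-global}), and finiteness proofs in special cases such as $E_{ij}^{-m}E_{ik}^n$. Your sketch, by contrast, proposes to start from the coarsest common refinement $\Delta_0$ and resolve it by an induction on a multiplicity-type invariant, choosing at each step a ray that is \emph{simultaneously} a smooth star-subdivision center for the current stage on both sides and strictly decreases the invariant. That step is exactly the heart of the problem and is not known to be achievable: no invariant (multiplicity, lattice-point counts in fundamental parallelepipeds, etc.) is known for which such a simultaneously admissible, invariant-decreasing ray always exists, and you yourself identify this as the ``central obstacle'' without resolving it. Note also that producing \emph{some} smooth common refinement $\Delta_3$ of $\Delta_0$ is not enough; the conjecture requires $\Delta_3$ to be reachable from each $\Delta_i$ by a sequence of \emph{smooth star subdivisions}, and a smooth refinement of a smooth fan need not a priori be reachable this way — this is the whole difficulty, so the reduction to ``resolve $\Delta_0$'' does not simplify the problem.

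A concrete factual error: you assert that ``in dimension $\leq 3$ the combinatorics is tractable enough for the direct approach to succeed,'' but the strong factorization conjecture is open in every dimension $\geq 3$ (only the $2$-dimensional case and the \emph{weak} factorization are known); the paper works in dimension $3$ precisely because even that case is unresolved. Your closing expectation — that the paper delivers an algorithm, partial results, and a reduction rather than a proof — is accurate, but it also confirms that what precedes it is a program, not a proof. The paper's actual route is different from yours in spirit as well: instead of refining downward from $\Delta_0$, it starts from a known weak factorization (a zig-zag of smooth star subdivisions and assemblies) and tries to commute assemblies past subdivisions via the rules of algorithm $A$, localizes this to sequences of elementary matrices $E_{ij}^{\pm 1}$, and reduces the conjecture to the (still conjectural) finiteness of that local rewriting process.
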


As the terminology suggests, there also exists a weak version of the factorization problem in which blowups and blowdowns are allowed in any order. This weak conjecture is known to hold for toric varieties \cite{Wlodarczyk1, Morelli1, Abramovich-Matsuki-Rashid} and also for general varieties in characteristic zero \cite{Wlodarczyk3, AKMW}. The strong factorization conjecture is open in all cases in dimension $3$ or higher.

In this article we study a simple algorithm that is conjectured to
construct the strong factorization for toric varieties. The problem is
that the algorithm may run into an infinite loop and never
finish. However, computer experiments and proofs of several special
cases suggest that the algorithm is always finite and solves
Conjecture~\ref{conj-oda}. 

To construct a strong factorization between two fans, we start with a
weak factorization that is known to exist. In other words, we assume
that we can get from one fan to the other by a sequence of smooth star
subdivisions and smooth star assemblies (the inverses of star
subdivisions). The goal is to ``commute'' star assemblies and star
subdivisions to have all assemblies after the subdivisions. The algorithm takes one star assembly followed by one star subdivision and replaces the pair with a sequence of star subdivisions followed by a sequence of star assemblies. This step is repeated until all star subdivisions precede the star assemblies. 

Let us consider $3$-dimensional nonsingular fans. We draw such a fan as a cross section, which is a $2$-dimensional simplicial complex. We may assume that all maximal cones have dimension $3$ and that all star subdivisions have their subdivision rays in the middle of $2$-dimensional cones. In coordinates, a cone generated by $v_1, v_2, v_3$ is divided by a star subdivision into two cones generated by   $v_1+v_2, v_2, v_3$ and   $v_1, v_1+ v_2, v_3$; the ray generated by $v_1+v_2$ is called the subdivision ray.

Now given two star subdivisions of one fan, we need to construct a
common refinement by further star subdividing the two new
fans. Figure~\ref{fig-alg} shows two ways of doing this, denoted by
$A$ and $B$. In this figure, the fan we start with consists of a
single cone generated by $v_1, v_2, v_3$. The two subdivisions have
subdivision rays generated by $v_1+v_2$ and $v_1+v_3$. Both factorizations $A$ and $B$ replace the assembly-subdivision pair by two subdivisions followed by two assemblies. (We read the sequence of maps from left to right, starting from the fan $\Delta_1$ and ending with $\Delta_2$.) 

\begin{figure}[h] \label{fig-alg}
\centerline{\setlength{\unitlength}{0.00029167in}
\begingroup\makeatletter\ifx\SetFigFontNFSS\undefined%
\gdef\SetFigFontNFSS#1#2#3#4#5{%
  \reset@font\fontsize{#1}{#2pt}%
  \fontfamily{#3}\fontseries{#4}\fontshape{#5}%
  \selectfont}%
\fi\endgroup%
{\renewcommand{\dashlinestretch}{30}
\begin{picture}(17427,7392)(0,-10)
\path(3315,2265)(3915,3465)(4515,2265)(3315,2265)
\path(4215,2865)(3315,2265)
\path(5265,2115)(6315,2115)
\path(5265,1815)(6315,1815)
\path(6165,2415)(6615,1965)(6165,1515)
\path(10665,2265)(11265,3465)(11865,2265)(10665,2265)
\path(11588,2830)(10688,2230)
\path(8265,4215)(8865,5415)(9465,4215)(8265,4215)
\path(7665,2265)(8265,3465)(8865,2265)(7665,2265)
\path(7965,2865)(8865,2265)
\path(10065,4215)(10665,5415)(11265,4215)(10065,4215)
\path(9165,6165)(9765,7365)(10365,6165)(9165,6165)
\path(9465,4215)(8565,4815)(9165,4815)
\path(10065,4215)(10965,4815)(10365,4815)
\path(10365,6165)(9465,6765)(10065,6765)(9165,6165)
\path(9165,6165)(9765,7365)(10365,6165)(9165,6165)
\path(13215,2265)(13815,3465)(14415,2265)(13215,2265)
\path(13815,4215)(14415,5415)(15015,4215)(13815,4215)
\path(14715,6165)(15315,7365)(15915,6165)(14715,6165)
\path(16215,2265)(16815,3465)(17415,2265)(16215,2265)
\path(13515,2865)(14415,2265)
\path(17115,2865)(16215,2265)
\path(15015,4215)(14115,4815)
\path(14415,5415)(14415,4665)(13815,4215)
\path(15615,4215)(16215,5415)(16815,4215)(15615,4215)
\path(15615,4215)(16515,4815)
\path(16215,5415)(16215,4665)(16815,4215)
\path(14715,6165)(15615,6765)
\path(15015,6765)(15915,6165)
\path(15315,7365)(15315,6615)
\path(915,2265)(1515,3465)(2115,2265)(915,2265)
\path(1215,2865)(2115,2265)
\path(2115,165)(2715,1365)(3315,165)(2115,165)
\path(1515,1965)(1965,1215)
\path(1877.536,1302.464)(1965.000,1215.000)(1928.985,1333.334)
\path(3915,1965)(3315,1215)
\path(3366.537,1327.445)(3315.000,1215.000)(3413.389,1289.963)
\path(8865,3915)(8715,3315)
\path(8715.000,3438.693)(8715.000,3315.000)(8773.209,3424.141)
\path(9465,5865)(9315,5265)
\path(9315.000,5388.693)(9315.000,5265.000)(9373.209,5374.141)
\path(10065,5865)(10215,5265)
\path(10156.791,5374.141)(10215.000,5265.000)(10215.000,5388.693)
\path(10665,3915)(10815,3315)
\path(10756.791,3424.141)(10815.000,3315.000)(10815.000,3438.693)
\path(14415,3915)(14265,3315)
\path(14265.000,3438.693)(14265.000,3315.000)(14323.209,3424.141)
\path(16215,3915)(16365,3315)
\path(16306.791,3424.141)(16365.000,3315.000)(16365.000,3438.693)
\path(15615,5865)(15765,5265)
\path(15706.791,5374.141)(15765.000,5265.000)(15765.000,5388.693)
\path(15050,5856)(14900,5256)
\path(14900.000,5379.693)(14900.000,5256.000)(14958.209,5365.141)
\put(4365,2865){\makebox(0,0)[lb]{\smash{{\SetFigFontNFSS{5}{6.0}{\rmdefault}{\mddefault}{\updefault}V1+V3}}}}
\put(3465,15){\makebox(0,0)[lb]{\smash{{\SetFigFontNFSS{5}{6.0}{\rmdefault}{\mddefault}{\updefault}V3}}}}
\put(1665,15){\makebox(0,0)[lb]{\smash{{\SetFigFontNFSS{5}{6.0}{\rmdefault}{\mddefault}{\updefault}V2}}}}
\put(2565,1515){\makebox(0,0)[lb]{\smash{{\SetFigFontNFSS{5}{6.0}{\rmdefault}{\mddefault}{\updefault}V1}}}}
\put(15,2940){\makebox(0,0)[lb]{\smash{{\SetFigFontNFSS{5}{6.0}{\rmdefault}{\mddefault}{\updefault}V1+V2}}}}
\put(9465,765){\makebox(0,0)[lb]{\smash{{\SetFigFontNFSS{11}{13.2}{\rmdefault}{\mddefault}{\updefault}A}}}}
\put(15315,765){\makebox(0,0)[lb]{\smash{{\SetFigFontNFSS{11}{13.2}{\rmdefault}{\mddefault}{\updefault}B}}}}
\end{picture}
}}
\caption{Algorithms $A$ and $B$.}
\end{figure}  

Figure~\ref{fig-alg}  describes two factorization algorithms $A$ and $B$ completely. If we have, instead of a single cone as in the picture, a global fan and its two star subdivisions, then the subdivisions commute if the subdivision rays do not lie in one cone. If the subdivision rays do lie in the same cone, then Figure~\ref{fig-alg} tells us how to factor the diagram. (If the cone containing the two subdivision rays lies in a bigger fan, the star subdivisions shown in   Figure~\ref{fig-alg} can clearly be exteded to star subdivisons of the bigger fan.)

Figure~\ref{fig-example}(a) shows algorithm $A$ applied to factor two star assemblies and one star subdivision (the lower edge of the diagram) into four star subdivisions followed by five star assemblies (the top of the diagram). 

\begin{figure}[h] %\label{fig-example}
 \begin{minipage}[b]{2.5in} %{0.5\linewidth}
\centering
\setlength{\unitlength}{0.00033333in}
\begingroup\makeatletter\ifx\SetFigFontNFSS\undefined%
\gdef\SetFigFontNFSS#1#2#3#4#5{%
  \reset@font\fontsize{#1}{#2pt}%
  \fontfamily{#3}\fontseries{#4}\fontshape{#5}%
  \selectfont}%
\fi\endgroup%
{\renewcommand{\dashlinestretch}{30}
\begin{picture}(7824,10689)(0,-10)
\path(4062,12)(4662,1212)(5262,12)
	(4062,12)(4062,12)
\path(2412,4662)(3012,5862)(3612,4662)
	(2412,4662)(2412,4662)
\path(1362,6462)(1962,7662)(2562,6462)
	(1362,6462)(1362,6462)
\path(4512,6462)(5112,7662)(5712,6462)
	(4512,6462)(4512,6462)
\path(2112,7962)(2712,9162)(3312,7962)
	(2112,7962)(2112,7962)
\path(3012,9462)(3612,10662)(4212,9462)
	(3012,9462)(3012,9462)
\path(5112,4662)(5712,5862)(6312,4662)
	(5112,4662)(5112,4662)
\path(5862,3162)(6462,4362)(7062,3162)
	(5862,3162)(5862,3162)
\path(6612,1512)(7212,2712)(7812,1512)
	(6612,1512)(6612,1512)
\path(3912,7962)(4512,9162)(5112,7962)
	(3912,7962)(3912,7962)
\path(12,3162)(612,4362)(1212,3162)
	(12,3162)(12,3162)
\path(612,4662)(1212,5862)(1812,4662)
	(612,4662)(612,4662)
\path(1662,1512)(2262,2712)(2862,1512)
	(1662,1512)(1662,1512)
\path(3462,3162)(4062,4362)(4662,3162)
	(3462,3162)(3462,3162)
\path(1962,2112)(2862,1512)
\path(312,3762)(1212,3162)(462,4062)
\path(7512,2112)(6612,1512)
\path(4662,3162)(3762,3762)(4362,3762)
\path(5862,3162)(6762,3762)(6162,3762)
\path(5112,4662)(6012,5262)(5412,5262)(6312,4662)
\path(3612,4662)(2712,5262)(3312,5262)(2862,5562)
\path(912,5262)(1812,4662)(1062,5562)(1512,5262)
\path(2562,6462)(1662,7062)(2262,7062)
	(1812,7362)(2562,6462)
\path(5712,6462)(4812,7062)(5412,7062)(4512,6462)
\path(4962,7362)(5412,7062)
\path(3912,7962)(4812,8562)(4212,8562)(5112,7962)
\path(4812,8562)(4362,8862)(4512,8562)(4512,8412)
\path(4212,9462)(3312,10062)(3912,10062)(3012,9462)
\path(3912,10062)(3462,10362)(3612,10062)(3612,9912)
\path(4212,9462)(3612,10062)
\path(3312,7962)(2412,8562)(3012,8562)
	(2562,8862)(2712,8562)(3312,7962)
\path(2712,8562)(2712,8412)(2112,7962)
\path(3162,1212)(4062,762)
\path(3941.252,788.833)(4062.000,762.000)(3968.085,842.498)
\path(6762,1212)(5712,612)
\path(5801.305,697.584)(5712.000,612.000)(5831.073,645.489)
\path(762,2862)(1662,2262)
\path(1545.513,2303.603)(1662.000,2262.000)(1578.795,2353.526)
\path(1212,4512)(1062,4062)
\path(1071.487,4185.329)(1062.000,4062.000)(1128.408,4166.355)
\path(1962,6312)(1662,5862)
\path(1703.603,5978.487)(1662.000,5862.000)(1753.526,5945.205)
\path(2712,7812)(2412,7362)
\path(2453.603,7478.487)(2412.000,7362.000)(2503.526,7445.205)
\path(3462,9312)(3162,8862)
\path(3203.603,8978.487)(3162.000,8862.000)(3253.526,8945.205)
\path(3762,9312)(4062,8862)
\path(3970.474,8945.205)(4062.000,8862.000)(4020.397,8978.487)
\path(4512,7812)(4812,7362)
\path(4720.474,7445.205)(4812.000,7362.000)(4770.397,7478.487)
\path(5112,6312)(5412,5862)
\path(5320.474,5945.205)(5412.000,5862.000)(5370.397,5978.487)
\path(5712,4362)(6012,3912)
\path(5920.474,3995.205)(6012.000,3912.000)(5970.397,4028.487)
\path(6462,2862)(6762,2412)
\path(6670.474,2495.205)(6762.000,2412.000)(6720.397,2528.487)
\path(5262,4362)(4662,3912)
\path(4740.000,4008.000)(4662.000,3912.000)(4776.000,3960.000)
\path(3612,2862)(2862,2262)
\path(2936.963,2360.389)(2862.000,2262.000)(2974.445,2313.537)
\path(3312,4512)(3612,4062)
\path(3520.474,4145.205)(3612.000,4062.000)(3570.397,4178.487)
\path(2262,6312)(2562,5862)
\path(2470.474,5945.205)(2562.000,5862.000)(2520.397,5978.487)
\path(4512,6312)(3462,5562)
\path(3542.211,5656.161)(3462.000,5562.000)(3577.085,5607.337)
\end{picture}
}\\
(a)
\end{minipage}
\hspace{0.5cm}
 \begin{minipage}[b]{2.5in} %{0.5\linewidth}
\centering
\setlength{\unitlength}{0.00029167in}
\begingroup\makeatletter\ifx\SetFigFontNFSS\undefined%
\gdef\SetFigFontNFSS#1#2#3#4#5{%
  \reset@font\fontsize{#1}{#2pt}%
  \fontfamily{#3}\fontseries{#4}\fontshape{#5}%
  \selectfont}%
\fi\endgroup%
{\renewcommand{\dashlinestretch}{30}
\begin{picture}(7824,7689)(0,-10)
\path(4062,12)(4662,1212)(5262,12)(4062,12)
\path(2412,4662)(3012,5862)(3612,4662)(2412,4662)
\path(1362,6462)(1962,7662)(2562,6462)(1362,6462)
\path(5112,4662)(5712,5862)(6312,4662)(5112,4662)
\path(5862,3162)(6462,4362)(7062,3162)(5862,3162)
\path(6612,1512)(7212,2712)(7812,1512)(6612,1512)
\path(12,3162)(612,4362)(1212,3162)(12,3162)
\path(612,4662)(1212,5862)(1812,4662)(612,4662)
\path(1662,1512)(2262,2712)(2862,1512)(1662,1512)
\path(1962,2112)(2862,1512)
\path(7512,2112)(6612,1512)
\path(3162,1212)(4062,762)
\path(3941.252,788.833)(4062.000,762.000)(3968.085,842.498)
\path(6762,1212)(5712,612)
\path(5801.305,697.584)(5712.000,612.000)(5831.073,645.489)
\path(762,2862)(1662,2262)
\path(1545.513,2303.603)(1662.000,2262.000)(1578.795,2353.526)
\path(1212,4512)(1062,4062)
\path(1071.487,4185.329)(1062.000,4062.000)(1128.408,4166.355)
\path(1962,6312)(1662,5862)
\path(1703.603,5978.487)(1662.000,5862.000)(1753.526,5945.205)
\path(5712,4362)(6012,3912)
\path(5920.474,3995.205)(6012.000,3912.000)(5970.397,4028.487)
\path(6462,2862)(6762,2412)
\path(6670.474,2495.205)(6762.000,2412.000)(6720.397,2528.487)
\path(5262,4362)(4662,3912)
\path(4740.000,4008.000)(4662.000,3912.000)(4776.000,3960.000)
\path(3612,2862)(2862,2262)
\path(2936.963,2360.389)(2862.000,2262.000)(2974.445,2313.537)
\path(3312,4512)(3612,4062)
\path(3520.474,4145.205)(3612.000,4062.000)(3570.397,4178.487)
\path(2262,6312)(2562,5862)
\path(2470.474,5945.205)(2562.000,5862.000)(2520.397,5978.487)
\path(6762,3762)(5862,3162)
\path(6462,4362)(6462,3537)(7062,3162)
\path(6012,5262)(5112,4662)
\path(5412,5262)(6312,4662)
\path(5712,5862)(5712,5037)
\path(1212,3162)(312,3762)(912,3762)
\path(3612,4662)(2712,5262)
\path(3012,5787)(3012,5112)(2412,4662)
\path(2712,5262)(3012,5262)(3612,4662)
\path(1512,5262)(912,5262)(1812,4662)
	(1212,5262)(1212,5862)
\path(2262,7062)(1662,7062)(2562,6462)
	(1962,7062)(1962,7662)
\path(1962,7062)(1962,6837)(1362,6462)
\texture{44555555 55aaaaaa aa555555 55aaaaaa aa555555 55aaaaaa aa555555 55aaaaaa 
	aa555555 55aaaaaa aa555555 55aaaaaa aa555555 55aaaaaa aa555555 55aaaaaa 
	aa555555 55aaaaaa aa555555 55aaaaaa aa555555 55aaaaaa aa555555 55aaaaaa 
	aa555555 55aaaaaa aa555555 55aaaaaa aa555555 55aaaaaa aa555555 55aaaaaa }
\path(4662,3162)(3762,3762)
\path(4662,3162)(3762,3762)
\shade\path(4062,4362)(4062,3537)(4662,3162)(4062,4362)
\path(4062,4362)(4062,3537)(4662,3162)(4062,4362)
\path(3462,3162)(4062,4362)(4662,3162)(3462,3162)
\path(4062,3537)(3462,3162)
\path(4062,3537)(3462,3162)
\end{picture}
}\\
(b)
\end{minipage}
\caption{Examples of factorization using (a) algorithm A and (b) algorithm B.}
\label{fig-example}
\end{figure}  

Figure~\ref{fig-example}(b) shows the first two steps of applying
algorithm $B$ to a sequence of two star assemblies followed by one
star subdivision. One can see that after replacing the original cone with the shaded one in the figure, we are back to the situation we started with. It follows that algorithm $B$ applied to the two star assemblies and one star subdivision in Figure~\ref{fig-example}(b) will run into a cycle and never finish. However, we have not found any such infinite loops in the case of algorithm A. Therefore we can state:

\begin{conjecture}\label{con-global-finite} 
Algorithm $A$ is always finite.
\end{conjecture}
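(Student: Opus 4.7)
The plan is to find a well-founded monovariant $\Phi$ on finite sequences of smooth star subdivisions and assemblies (with common endpoints) such that each application of the local move of Algorithm~$A$ strictly decreases $\Phi$; termination is then immediate. The naive choice — the total number of adjacent (assembly, subdivision) inversions — does not work, since Algorithm~$A$ replaces one inversion by a block of four moves that can create new inversions with the neighbouring moves. The refinement I would pursue is a multiset-based measure in which each inversion is weighted by a local complexity and the multisets are compared in the Dershowitz--Manna order, which is well-founded whenever the underlying complexity order is.

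A natural candidate for the local complexity of an inversion formed by an assembly contracting a ray $\rho_A$ followed by a subdivision inserting a ray $\rho_S$ is a pair $(k,\ell)$ recording the dimension $k$ of the smallest cone of the intermediate fan that contains both $\rho_A$ and $\rho_S$, together with a secondary invariant $\ell$ of that cone (for instance a count of rays in its star, or a lattice-theoretic invariant measuring how far $\rho_S$ sits from being the ``stellar centre'' of the cone). The heart of the argument would be a purely local lemma: the four new moves produced by one application of Algorithm~$A$, together with their interaction with the two moves immediately before and after them, introduce only inversions of complexity strictly smaller than $(k,\ell)$. The contrast with Algorithm~$B$ is already visible in Figure~\ref{fig-example}(b), where the shaded cone reproduces the original configuration; the substance of the proof is that the specific local geometry chosen by $A$ in Figure~\ref{fig-alg} strictly decreases the complexity, while $B$'s choice does not.

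The principal obstacle is controlling the cascading interaction of the four newly produced moves with the rest of the sequence. The easy case is when the involved subdivision rays do not share a common cone, so inversions with neighbours commute away without cost. The difficult case is when they do, and here a careful toric case analysis based on how $\rho_A$, $\rho_S$, and the neighbouring subdivision/assembly rays generate a cone in the ambient lattice seems unavoidable; this is the step I expect to be the most technically delicate, and is presumably the reason the conjecture has so far resisted proof.

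As a guide I would first settle the conjecture for $3$-dimensional nonsingular fans, where the cross-section is a $2$-dimensional simplicial complex as in Figures~\ref{fig-alg} and \ref{fig-example} and all local moves can be enumerated by hand. Success in that setting would both validate the proposed monovariant $\Phi$ and should suggest the correct higher-dimensional form of the local complexity pair $(k,\ell)$, after which the general case becomes a matter of extending the dimension-$3$ case analysis inductively on $k$.
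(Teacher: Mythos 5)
This statement is a conjecture which the paper itself does not prove: the authors explicitly say they cannot establish finiteness of Algorithm~$A$ in general, and instead they reduce it to a local, purely algebraic version (Proposition~\ref{prop-local-global} shows finiteness of the local algorithm of Conjecture~\ref{conj-local-fin} would imply Oda's conjecture) and prove finiteness only for special families of inputs, e.g.\ directed sequences and $E_{ij}^{-m}E_{ik}^{n}$ in Theorem~\ref{thm-main}. Your proposal is a plan rather than a proof, and the missing piece is exactly the heart of the matter: you assert, but do not establish, the ``purely local lemma'' that the moves created by one application of Algorithm~$A$ only introduce inversions of strictly smaller complexity. Without that lemma the Dershowitz--Manna machinery has nothing to act on, and you yourself flag this step as the one you expect to be delicate; so no part of the conjecture is actually settled by the argument as written.

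Moreover, the specific monovariant you propose is unlikely to survive even the dimension-$3$ case you suggest as a warm-up. In the paper's setting all subdivision rays lie in $2$-dimensional cones and the only nontrivial inversions are those whose two rays span a common $3$-dimensional cone, so your primary invariant $k$ is constant on every inversion that matters; everything is carried by the unspecified secondary invariant $\ell$, and it is far from clear any natural choice decreases. The paper's own data argue that any such measure must tolerate enormous intermediate growth: one step of Algorithm~$A$ replaces a single inversion by several new subdivisions whose rays (e.g.\ $v_1+v_2+v_3$ in Figure~\ref{fig-alg}) again share cones with neighbouring moves, the cascades in Figure~\ref{fig-example}(a) show inversions proliferating rather than locally simplifying, and the counts in Figure~\ref{table-number} (over $10^5$ cones for $m=n=40$) show the total size of the sequence explodes before termination. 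Indeed the authors state in Section~4 that no bound on the length or complexity of the factorizations is known even in the cases they can handle; finding a well-founded monovariant of the kind you describe is precisely the open problem, not a step one may assume.
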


When constructing examples of factorizations using algorithm $A$, the most complicated ones are similar to the one in Figure~\ref{fig-example}(a). We take two sequences of star subdivisions of a single cone generated by $v_1, v_2, v_3$. On one side we star subdivide at the rays generated by $v_1+v_2$, $2v_1+ v_2$, $\ldots$, $m v_1+v_2$, and on the other side we subdivide at the rays generated by $v_1+v_3$, $2v_1+ v_3$, $\ldots$, $n v_1+v_3$. The example in  Figure~\ref{fig-example}(a) shows the case when $m=2$ and $n=1$. When both $m=n=10$, the number of star subdivisions in the diagram will be in the thousands. When $m=n=40$, the number of star subdivisions needed will be in the hundreds of thousands.

One of the main results we prove is that algorithm $A$ is finite on the diagrams with $m$ star assemblies and $n$ star subdivisions as described above. We give a precise pattern for the cones appearing in the common refinement. On other types of diagrams the factorization algorithm may be shorter, but we can not say anything about the regularity or patterns that may occur. As a result we cannot prove finiteness in general.

To study algorithm $A$, we reduce it to the {\em local} case and prove that finiteness of the local algorithm implies finiteness of the global one. The local algorithm is more algebraic. It can be applied to sequences of symbols instead of drawing pictures, and it can also be easily implemented on a computer.

We will work only with fans in dimension $3$. The algorithm, however, also applies to fans in dimension greater than $3$. In higher dimensions we can again assume that all subdivision rays lie in $2$-dimensional cones. Then two star subdivisions commute unless their subdivision rays lie in the same $3$-dimensional cone. In the latter case we can apply the algorithm as in the $3$-dimensional case. Everything we say below for $3$-dimensional fans is also true, with minimal modifications, for higher dimensinal fans.

{\bf Acknowledgements.} The two algorithms discussed in this article are certainly not new and have been studied by many people. The second author would especially like to thank Dan Abramovich, Kenji Matsuki and Jaroslaw W{\l}odarczyk for fruitful discussion regarding these algorithms and their possible extensions to non-toric situations.
 
\section{The local algorithm}

\subsection{Fans and star subdivisions}

We refer to \cite{F,O} for background material about fans and toric varieties. 

We only consider $3$-dimensional fans in $\RR^3$ where all maximal cones have dimension $3$. A nonsingular cone $\sigma = \langle v_1, v_2, v_3 \rangle$ is generated by a basis $v_1, v_2, v_3$ of the lattice $\ZZ^3 \subset \RR^3$. A nonsingular fan has all its maximal cones nonsingular. A star subdivision of a nonsingular fan is called {\em smooth} if the resulting fan is again nonsingular. The inverse of a smooth star subdivision is called a smooth star assembly.

There are two types of star subdivisions of $3$-dimensional fans --
the subdivision ray can be in the interior of a $3$-dimensional cone,
or in a $2$-dimensional cone. We can always replace the first type of
star subdivision by a sequence of star subdivisions and assemblies of
the second type (see Figure~\ref{fig-replace}). A smooth star subdivision of a cone $\sigma = \langle v_1, v_2, v_3 \rangle$  of the second type has its subdivision ray generated by $v_i+v_j$ for $i\neq j$.

\begin{figure}[h]
\centerline{\setlength{\unitlength}{0.00029167in}
\begingroup\makeatletter\ifx\SetFigFontNFSS\undefined%
\gdef\SetFigFontNFSS#1#2#3#4#5{%
  \reset@font\fontsize{#1}{#2pt}%
  \fontfamily{#3}\fontseries{#4}\fontshape{#5}%
  \selectfont}%
\fi\endgroup%
{\renewcommand{\dashlinestretch}{30}
\begin{picture}(8424,5304)(0,-10)
\path(4167,12)(4767,1212)(5367,12)(4167,12)
\path(12,2427)(612,3627)(1212,2427)(12,2427)
\path(42,12)(642,1212)(1242,12)(42,12)
\path(12,2427)(612,2877)(1212,2427)
\path(612,3627)(612,2877)
\path(4542,2262)(5142,3462)(5742,2262)(4542,2262)
\path(6012,4077)(6612,5277)(7212,4077)(6012,4077)
\path(5412,2877)(4512,2277)
\path(6912,4677)(6012,4077)
\path(6612,5277)(6612,4527)(7212,4077)
\path(7212,2277)(7812,3477)(8412,2277)(7212,2277)
\path(7212,2277)(7812,2727)(8412,2277)
\path(7812,3477)(7812,2727)
\path(612,2277)(612,1377)
\path(582.000,1497.000)(612.000,1377.000)(642.000,1497.000)
\path(5112,2127)(4812,1377)
\path(4828.713,1499.559)(4812.000,1377.000)(4884.421,1477.275)
\path(2112,1977)(3162,1977)
\path(2112,1677)(3162,1677)
\path(3012,2277)(3462,1827)(3012,1377)
\path(6312,3927)(5637,3252)
\path(5700.640,3358.066)(5637.000,3252.000)(5743.066,3315.640)
\path(6912,3927)(7437,3327)
\path(7335.402,3397.554)(7437.000,3327.000)(7380.557,3437.064)
\end{picture}
}}
\caption{Replacing one star subdivision by a sequence of subdivisions and assemblies.} \label{fig-replace}
\end{figure}  

\subsection{The global algorithm}
 
Recall from the introduction that we start with a sequence of nonsingular fans, connected by smooth star subdivisions. We may assume that all subdivision rays lie in $2$-dimensional cones. This property is preserved after applying one step of algorithm $A$, hence we will only consider star subdivisions and assemblies of this type.
The algorithm terminates if all star subdivisions precede star assemblies; in other words, when we have a strong factorization. 

At each step of the algorithm there may be many choices of pairs of a star assembly followed by a star subdivision that we wish to commute. To make the algorithm not depend on any choices, we need to fix one ordering of such pairs, for example we can insist that always the leftmost pair is commuted. However,  finiteness of the algorithm or its end result (in case it is finite) does not depend on the chosen order. 

It is also clear that the algorithm is finite if it is finite when applied to a sequence of star assemblies followed by a sequence of star subdivisions (or just one star subdivision). Thus we may consider a single fan $\Delta$ and two sequences of smooth star subdivisions of this fan. If the algorithm is finite, it will produce extensions of these two sequences resulting in a common refinement.

\subsection {Localization}

To localize the algorithm, we replace a fan by a single cone and a
star subdivision by a subdivision of the cone together with a choice of a cone in the subdivided fan. When drawing pictures of local subdivisions, we indicate the chosen cone by shading it. 

Now given two local subdivisions of the same cone, we can use the global algorithm to construct a local factorization. Figure~\ref{fig-local-example} shows an example of such a local factorization. Notice that the factorization of the two initial local subdivisions in this example is unique: there is a unique choice of cone for each subdivision provided by algorithm $A$. 
 
\begin{figure}[h]
\centerline{\setlength{\unitlength}{0.00029167in}
\begingroup\makeatletter\ifx\SetFigFontNFSS\undefined%
\gdef\SetFigFontNFSS#1#2#3#4#5{%
  \reset@font\fontsize{#1}{#2pt}%
  \fontfamily{#3}\fontseries{#4}\fontshape{#5}%
  \selectfont}%
\fi\endgroup%
{\renewcommand{\dashlinestretch}{30}
\begin{picture}(5199,6429)(0,-10)
\path(1917,12)(2517,1212)(3117,12)(1917,12)
\path(3987,1602)(4587,2802)(5187,1602)(3987,1602)
\path(762,1227)(1662,777)
\path(1541.252,803.833)(1662.000,777.000)(1568.085,857.498)
\path(4287,1377)(3387,777)
\path(3470.205,868.526)(3387.000,777.000)(3503.487,818.603)
\path(4887,2202)(3987,1602)
\path(1962,5202)(2562,6402)(3162,5202)(1962,5202)
\path(1962,5202)(2862,5802)(2262,5802)(3162,5202)
\path(3012,3402)(3612,4602)(4212,3402)(3012,3402)
\path(3012,3402)(3912,4002)(3312,4002)
\path(912,3402)(1512,4602)(2112,3402)(912,3402)
\path(2112,3402)(1212,4002)(1812,4002)
\path(312,2202)(1212,1602)
\path(12,1602)(612,2802)(1212,1602)(12,1602)
\path(2297,4879)(1997,4429)
\path(2038.603,4545.487)(1997.000,4429.000)(2088.526,4512.205)
\path(2862,4902)(3162,4452)
\path(3070.474,4535.205)(3162.000,4452.000)(3120.397,4568.487)
\path(1362,3177)(1062,2727)
\path(1103.603,2843.487)(1062.000,2727.000)(1153.526,2810.205)
\path(3837,3177)(4137,2727)
\path(4045.474,2810.205)(4137.000,2727.000)(4095.397,2843.487)
\texture{44555555 55aaaaaa aa555555 55aaaaaa aa555555 55aaaaaa aa555555 55aaaaaa 
	aa555555 55aaaaaa aa555555 55aaaaaa aa555555 55aaaaaa aa555555 55aaaaaa 
	aa555555 55aaaaaa aa555555 55aaaaaa aa555555 55aaaaaa aa555555 55aaaaaa 
	aa555555 55aaaaaa aa555555 55aaaaaa aa555555 55aaaaaa aa555555 55aaaaaa }
\shade\path(612,2802)(312,2202)(1212,1602)
	(612,2802)(612,2802)
\path(612,2802)(312,2202)(1212,1602)
	(612,2802)(612,2802)
\shade\path(4887,2202)(3987,1602)(5187,1602)
	(4887,2202)(4887,2202)
\path(4887,2202)(3987,1602)(5187,1602)
	(4887,2202)(4887,2202)
\shade\path(1812,4002)(1212,4002)(2112,3402)
	(1812,4002)(1812,4002)
\path(1812,4002)(1212,4002)(2112,3402)
	(1812,4002)(1812,4002)
\shade\path(3912,4002)(3012,3402)(4212,3402)
	(3912,4002)(3912,4002)
\path(3912,4002)(3012,3402)(4212,3402)
	(3912,4002)(3912,4002)
\shade\path(2862,5802)(2562,5577)(3162,5202)
	(2862,5802)(2862,5802)
\path(2862,5802)(2562,5577)(3162,5202)
	(2862,5802)(2862,5802)
\end{picture}
}}
\caption{Local factorization using algorithm $A$.}
\label{fig-local-example}
\end{figure}  

Figure~\ref{fig-local-example2} shows two different local factorizations of one pair of initial local subdivisions. In this case algorithm $A$ provides two choices of local factorizations. When factoring a sequence that is longer than two star subdivisions, at each step of applying algorithm $A$ we have one or two choices of local factorization. As a result, we get in general many local factorizations of one initial sequence.

\begin{figure}[h]
\centerline{\setlength{\unitlength}{0.00029167in}
\begingroup\makeatletter\ifx\SetFigFontNFSS\undefined%
\gdef\SetFigFontNFSS#1#2#3#4#5{%
  \reset@font\fontsize{#1}{#2pt}%
  \fontfamily{#3}\fontseries{#4}\fontshape{#5}%
  \selectfont}%
\fi\endgroup%
{\renewcommand{\dashlinestretch}{30}
\begin{picture}(11424,6429)(0,-10)
\path(8142,12)(8742,1212)(9342,12)(8142,12)
\path(10212,1602)(10812,2802)(11412,1602)(10212,1602)
\path(6987,1227)(7887,777)
\path(7766.252,803.833)(7887.000,777.000)(7793.085,857.498)
\path(10512,1377)(9612,777)
\path(9695.205,868.526)(9612.000,777.000)(9728.487,818.603)
\path(11112,2202)(10212,1602)
\path(8187,5202)(8787,6402)(9387,5202)(8187,5202)
\path(8187,5202)(9087,5802)(8487,5802)(9387,5202)
\path(9237,3402)(9837,4602)(10437,3402)(9237,3402)
\path(9237,3402)(10137,4002)(9537,4002)
\path(7137,3402)(7737,4602)(8337,3402)(7137,3402)
\path(8337,3402)(7437,4002)(8037,4002)
\path(6537,2202)(7437,1602)
\path(6237,1602)(6837,2802)(7437,1602)(6237,1602)
\path(8522,4879)(8222,4429)
\path(8263.603,4545.487)(8222.000,4429.000)(8313.526,4512.205)
\path(9087,4902)(9387,4452)
\path(9295.474,4535.205)(9387.000,4452.000)(9345.397,4568.487)
\path(7587,3177)(7287,2727)
\path(7328.603,2843.487)(7287.000,2727.000)(7378.526,2810.205)
\path(10062,3177)(10362,2727)
\path(10270.474,2810.205)(10362.000,2727.000)(10320.397,2843.487)
\texture{44555555 55aaaaaa aa555555 55aaaaaa aa555555 55aaaaaa aa555555 55aaaaaa 
	aa555555 55aaaaaa aa555555 55aaaaaa aa555555 55aaaaaa aa555555 55aaaaaa 
	aa555555 55aaaaaa aa555555 55aaaaaa aa555555 55aaaaaa aa555555 55aaaaaa 
	aa555555 55aaaaaa aa555555 55aaaaaa aa555555 55aaaaaa aa555555 55aaaaaa }
\shade\path(6837,2802)(6537,2202)(7437,1602)
	(6837,2802)(6837,2802)
\path(6837,2802)(6537,2202)(7437,1602)
	(6837,2802)(6837,2802)
\shade\path(10212,1602)(11112,2202)(10812,2802)(10212,1602)
\path(10212,1602)(11112,2202)(10812,2802)(10212,1602)
\path(1917,12)(2517,1212)(3117,12)(1917,12)
\path(3987,1602)(4587,2802)(5187,1602)(3987,1602)
\path(762,1227)(1662,777)
\path(1541.252,803.833)(1662.000,777.000)(1568.085,857.498)
\path(4287,1377)(3387,777)
\path(3470.205,868.526)(3387.000,777.000)(3503.487,818.603)
\path(4887,2202)(3987,1602)
\path(1962,5202)(2562,6402)(3162,5202)(1962,5202)
\path(1962,5202)(2862,5802)(2262,5802)(3162,5202)
\path(3012,3402)(3612,4602)(4212,3402)(3012,3402)
\path(3012,3402)(3912,4002)(3312,4002)
\path(912,3402)(1512,4602)(2112,3402)(912,3402)
\path(2112,3402)(1212,4002)(1812,4002)
\path(312,2202)(1212,1602)
\path(12,1602)(612,2802)(1212,1602)(12,1602)
\path(2297,4879)(1997,4429)
\path(2038.603,4545.487)(1997.000,4429.000)(2088.526,4512.205)
\path(2862,4902)(3162,4452)
\path(3070.474,4535.205)(3162.000,4452.000)(3120.397,4568.487)
\path(1362,3177)(1062,2727)
\path(1103.603,2843.487)(1062.000,2727.000)(1153.526,2810.205)
\path(3837,3177)(4137,2727)
\path(4045.474,2810.205)(4137.000,2727.000)(4095.397,2843.487)
\shade\path(612,2802)(312,2202)(1212,1602)
	(612,2802)(612,2802)
\path(612,2802)(312,2202)(1212,1602)
	(612,2802)(612,2802)
\shade\path(1812,4002)(1212,4002)(2112,3402)
	(1812,4002)(1812,4002)
\path(1812,4002)(1212,4002)(2112,3402)
	(1812,4002)(1812,4002)
\shade\path(3987,1602)(4887,2202)(4587,2802)(3987,1602)
\path(3987,1602)(4887,2202)(4587,2802)(3987,1602)
\shade\path(3912,4002)(3312,4002)(3012,3402)
	(3912,4002)(3912,4002)
\path(3912,4002)(3312,4002)(3012,3402)
	(3912,4002)(3912,4002)
\shade\path(2262,5802)(2562,5577)(2862,5802)
	(2262,5802)(2262,5802)
\path(2262,5802)(2562,5577)(2862,5802)
	(2262,5802)(2262,5802)
\shade\path(7737,4602)(7437,4002)(8037,4002)
	(7737,4602)(7737,4602)
\path(7737,4602)(7437,4002)(8037,4002)
	(7737,4602)(7737,4602)
\shade\path(9837,4602)(9537,4002)(10137,4002)
	(9837,4602)(9837,4602)
\path(9837,4602)(9537,4002)(10137,4002)
	(9837,4602)(9837,4602)
\shade\path(8787,6402)(8487,5802)(9087,5802)
	(8787,6402)(8787,6402)
\path(8787,6402)(8487,5802)(9087,5802)
	(8787,6402)(8787,6402)
\end{picture}
}}
\caption{Two local factorizations of the same initial sequence.}
\label{fig-local-example2}
\end{figure}  

A local star subdivision can be represented by a matrix as
follows. Consider a cone $\langle v_1, v_2, v_3\rangle$ and its local
star subdivision resulting in the new cone $\langle v_1, v_1+v_2, v_3\rangle$. Let $M$ be the $3\times 3$ matrix with columns $ v_1, v_2, v_3$ and $N$ the matrix with columns $\langle v_1, v_1+v_2, v_3\rangle$. Then 
\[ N = M E_{1 2},\]
where $E_{1 2}$ is an elementary matrix that differs from the identity matrix by the entry $1$ at position $(1,2)$. In the same way all $6$ possible local subdivisions of the cone $\langle v_1, v_2, v_3\rangle$ can be represented by elementary matrices $E_{ij}$ for $i,j\in\{1,2,3\}, i\neq j$. Star assemblies are represented by inverses $E_{ij}^{-1}$ of these elementary matrices.

To understand the local factorization algorithm in terms of matrices, consider  Figure~\ref{fig-local-example-mat} where we have labeled the local star subdivisions by elementary matrices. The factorization replaces the bottom of the diagram, which we read from left to right as $E_{12}^{-1} E_{31}$, with the top of the diagram $E_{31} E_{32} E_{12}^{-1}$. We denote this replacement as:
\[ E_{12}^{-1} E_{31} \go E_{31} E_{32} E_{12}^{-1}.\]
(One can recognize this as an actual equality between products of elementary matrices, but we use the symbol $\go$ to indicate the direction in which the replacement is done.)

\begin{figure}[h]
\centerline{\setlength{\unitlength}{0.00029167in}
\begingroup\makeatletter\ifx\SetFigFontNFSS\undefined%
\gdef\SetFigFontNFSS#1#2#3#4#5{%
  \reset@font\fontsize{#1}{#2pt}%
  \fontfamily{#3}\fontseries{#4}\fontshape{#5}%
  \selectfont}%
\fi\endgroup%
{\renewcommand{\dashlinestretch}{30}
\begin{picture}(5232,6582)(0,-10)
\path(1950,165)(2550,1365)(3150,165)(1950,165)
\path(4020,1755)(4620,2955)(5220,1755)(4020,1755)
\path(795,1380)(1695,930)
\path(1574.252,956.833)(1695.000,930.000)(1601.085,1010.498)
\path(4320,1530)(3420,930)
\path(3503.205,1021.526)(3420.000,930.000)(3536.487,971.603)
\path(4920,2355)(4020,1755)
\path(1995,5355)(2595,6555)(3195,5355)(1995,5355)
\path(1995,5355)(2895,5955)(2295,5955)(3195,5355)
\path(3045,3555)(3645,4755)(4245,3555)(3045,3555)
\path(3045,3555)(3945,4155)(3345,4155)
\path(945,3555)(1545,4755)(2145,3555)(945,3555)
\path(2145,3555)(1245,4155)(1845,4155)
\path(345,2355)(1245,1755)
\path(45,1755)(645,2955)(1245,1755)(45,1755)
\texture{44555555 55aaaaaa aa555555 55aaaaaa aa555555 55aaaaaa aa555555 55aaaaaa 
	aa555555 55aaaaaa aa555555 55aaaaaa aa555555 55aaaaaa aa555555 55aaaaaa 
	aa555555 55aaaaaa aa555555 55aaaaaa aa555555 55aaaaaa aa555555 55aaaaaa 
	aa555555 55aaaaaa aa555555 55aaaaaa aa555555 55aaaaaa aa555555 55aaaaaa }
\shade\path(645,2955)(345,2355)(1245,1755)(645,2955)
\path(645,2955)(345,2355)(1245,1755)(645,2955)
\shade\path(4920,2355)(4020,1755)(5220,1755)(4920,2355)
\path(4920,2355)(4020,1755)(5220,1755)(4920,2355)
\shade\path(1845,4155)(1245,4155)(2145,3555)(1845,4155)
\path(1845,4155)(1245,4155)(2145,3555)(1845,4155)
\shade\path(3945,4155)(3045,3555)(4245,3555)(3945,4155)
\path(3945,4155)(3045,3555)(4245,3555)(3945,4155)
\shade\path(2895,5955)(2595,5730)(3195,5355)(2895,5955)
\path(2895,5955)(2595,5730)(3195,5355)(2895,5955)
\path(1280,3382)(980,2932)
\path(1021.603,3048.487)(980.000,2932.000)(1071.526,3015.205)
\path(3985,3382)(4285,2932)
\path(4193.474,3015.205)(4285.000,2932.000)(4243.397,3048.487)
\path(3045,5055)(3345,4605)
\path(3253.474,4688.205)(3345.000,4605.000)(3303.397,4721.487)
\path(2145,5055)(1845,4605)
\path(1886.603,4721.487)(1845.000,4605.000)(1936.526,4688.205)
\put(2415,1590){\makebox(0,0)[lb]{\smash{{\SetFigFontNFSS{5}{6.0}{\rmdefault}{\mddefault}{\updefault}V1}}}}
\put(3300,60){\makebox(0,0)[lb]{\smash{{\SetFigFontNFSS{5}{6.0}{\rmdefault}{\mddefault}{\updefault}V3}}}}
\put(4050,870){\makebox(0,0)[lb]{\smash{{\SetFigFontNFSS{8}{9.6}{\rmdefault}{\mddefault}{\updefault}E}}}}
\put(570,855){\makebox(0,0)[lb]{\smash{{\SetFigFontNFSS{8}{9.6}{\rmdefault}{\mddefault}{\updefault}E}}}}
\put(15,3150){\makebox(0,0)[lb]{\smash{{\SetFigFontNFSS{8}{9.6}{\rmdefault}{\mddefault}{\updefault}E}}}}
\put(3540,5040){\makebox(0,0)[lb]{\smash{{\SetFigFontNFSS{8}{9.6}{\rmdefault}{\mddefault}{\updefault}E}}}}
\put(1215,5070){\makebox(0,0)[lb]{\smash{{\SetFigFontNFSS{8}{9.6}{\rmdefault}{\mddefault}{\updefault}E}}}}
\put(840,735){\makebox(0,0)[lb]{\smash{{\SetFigFontNFSS{5}{6.0}{\rmdefault}{\mddefault}{\updefault}12}}}}
\put(4350,735){\makebox(0,0)[lb]{\smash{{\SetFigFontNFSS{5}{6.0}{\rmdefault}{\mddefault}{\updefault}31}}}}
\put(315,3105){\makebox(0,0)[lb]{\smash{{\SetFigFontNFSS{5}{6.0}{\rmdefault}{\mddefault}{\updefault}31}}}}
\put(4785,3120){\makebox(0,0)[lb]{\smash{{\SetFigFontNFSS{8}{9.6}{\rmdefault}{\mddefault}{\updefault}Id}}}}
\put(3825,4965){\makebox(0,0)[lb]{\smash{{\SetFigFontNFSS{5}{6.0}{\rmdefault}{\mddefault}{\updefault}12}}}}
\put(1485,4995){\makebox(0,0)[lb]{\smash{{\SetFigFontNFSS{5}{6.0}{\rmdefault}{\mddefault}{\updefault}32}}}}
\put(1470,15){\makebox(0,0)[lb]{\smash{{\SetFigFontNFSS{5}{6.0}{\rmdefault}{\mddefault}{\updefault}V2}}}}
\end{picture}
}}
\caption{Local factorization represented by matrices.}
\label{fig-local-example-mat}
\end{figure}  

The local factorization algorithm can now be described as follows. We start with a sequence of elementary matrices and their inverses. At each step we look for a pair $E_{ij}^{-1} E_{kl}$ in the sequence, and the algorithm tells us how to commute these matrices, possibly inserting new elementary matrices. The algorithm is finished when all elementary matrices lie to the left of the inverses.

One problem with the local factorization algorithm that we haven't
discussed is that the matrix representation $M$ of a cone $\langle
v_1, v_2, v_3\rangle$ depends on the ordering of the generators. However, if we choose one ordering of generators, then we automatically get an ordering of generators after one local star subdivision and hence the elementary matrix that represents this subdivision. Now the question is, if we have two sequences of local star subdivisions starting and ending with the same cone, do the orderings induced from both sequences agree at the final cone? The answer is ``no'' in general. Figure~\ref{fig-local-example-order} shows a local factorization step prescribed by algorithm $A$, where there is no consistent ordering of generators in all cones. The two orderings of generators in the top cone are cyclic permutations of each other. We write this factorization as:
\[ E_{12}^{-1} E_{13} \go E_{31} E_{23} R_{321} E_{32}^{-1} E_{21}^{-1} .\]
Here $R_{321}$ is a permutation matrix that represents the cyclic permutation of generators.

\begin{figure}[h]
\centerline{\setlength{\unitlength}{0.00029167in}
\begingroup\makeatletter\ifx\SetFigFontNFSS\undefined%
\gdef\SetFigFontNFSS#1#2#3#4#5{%
  \reset@font\fontsize{#1}{#2pt}%
  \fontfamily{#3}\fontseries{#4}\fontshape{#5}%
  \selectfont}%
\fi\endgroup%
{\renewcommand{\dashlinestretch}{30}
\begin{picture}(6402,6657)(0,-10)
\path(765,1365)(1665,915)
\path(1544.252,941.833)(1665.000,915.000)(1571.085,995.498)
\path(2115,3540)(1215,4140)(1815,4140)
\path(315,2340)(1215,1740)
\path(15,1740)(615,2940)(1215,1740)(15,1740)
\path(2300,5017)(2000,4567)
\path(2041.603,4683.487)(2000.000,4567.000)(2091.526,4650.205)
\path(1365,3315)(1065,2865)
\path(1106.603,2981.487)(1065.000,2865.000)(1156.526,2948.205)
\texture{44555555 55aaaaaa aa555555 55aaaaaa aa555555 55aaaaaa aa555555 55aaaaaa 
	aa555555 55aaaaaa aa555555 55aaaaaa aa555555 55aaaaaa aa555555 55aaaaaa 
	aa555555 55aaaaaa aa555555 55aaaaaa aa555555 55aaaaaa aa555555 55aaaaaa 
	aa555555 55aaaaaa aa555555 55aaaaaa aa555555 55aaaaaa aa555555 55aaaaaa }
\shade\path(615,2940)(315,2340)(1215,1740)(615,2940)
\path(615,2940)(315,2340)(1215,1740)(615,2940)
\shade\path(1815,4140)(1215,4140)(2115,3540)(1815,4140)
\path(1815,4140)(1215,4140)(2115,3540)(1815,4140)
\path(2490,165)(3090,1365)(3690,165)(2490,165)
\path(5190,1815)(5790,3015)(6390,1815)(5190,1815)
\shade\path(5190,1815)(6090,2415)(5790,3015)(5190,1815)
\path(5190,1815)(6090,2415)(5790,3015)(5190,1815)
\path(4365,3540)(4965,4740)(5565,3540)(4365,3540)
\path(4365,3540)(5265,4140)(4665,4140)
\shade\path(5253,4157)(4653,4157)(4353,3557)(5253,4157)
\path(5253,4157)(4653,4157)(4353,3557)(5253,4157)
\path(1770,5430)(2370,6630)(2970,5430)(1770,5430)
\shade\path(2040,6015)(2340,5790)(2640,6015)(2040,6015)
\path(2040,6015)(2340,5790)(2640,6015)(2040,6015)
\path(1740,5415)(2640,6015)(2040,6015)(2940,5415)
\path(5080,3292)(5380,2842)
\path(5288.474,2925.205)(5380.000,2842.000)(5338.397,2958.487)
\path(4365,5040)(4665,4590)
\path(4573.474,4673.205)(4665.000,4590.000)(4623.397,4706.487)
\path(5190,1515)(4290,915)
\path(4373.205,1006.526)(4290.000,915.000)(4406.487,956.603)
\path(3690,5415)(4290,6615)(4890,5415)(3690,5415)
\path(3687,5419)(4662,6019)(3987,6019)(4887,5419)
\shade\path(3990,6015)(4290,5790)(4665,6015)
	(3990,6015)(3990,6015)
\path(3990,6015)(4290,5790)(4665,6015)
	(3990,6015)(3990,6015)
\path(915,3540)(1515,4740)(2115,3540)(915,3540)
\put(2265,15){\makebox(0,0)[lb]{\smash{{\SetFigFontNFSS{5}{6.0}{\rmdefault}{\mddefault}{\updefault}2}}}}
\put(3840,15){\makebox(0,0)[lb]{\smash{{\SetFigFontNFSS{5}{6.0}{\rmdefault}{\mddefault}{\updefault}3}}}}
\put(3015,1515){\makebox(0,0)[lb]{\smash{{\SetFigFontNFSS{5}{6.0}{\rmdefault}{\mddefault}{\updefault}1}}}}
\put(540,3090){\makebox(0,0)[lb]{\smash{{\SetFigFontNFSS{5}{6.0}{\rmdefault}{\mddefault}{\updefault}1}}}}
\put(15,2340){\makebox(0,0)[lb]{\smash{{\SetFigFontNFSS{5}{6.0}{\rmdefault}{\mddefault}{\updefault}2}}}}
\put(915,4140){\makebox(0,0)[lb]{\smash{{\SetFigFontNFSS{5}{6.0}{\rmdefault}{\mddefault}{\updefault}2}}}}
\put(1740,6015){\makebox(0,0)[lb]{\smash{{\SetFigFontNFSS{5}{6.0}{\rmdefault}{\mddefault}{\updefault}2}}}}
\put(1365,1665){\makebox(0,0)[lb]{\smash{{\SetFigFontNFSS{5}{6.0}{\rmdefault}{\mddefault}{\updefault}3}}}}
\put(2190,3465){\makebox(0,0)[lb]{\smash{{\SetFigFontNFSS{5}{6.0}{\rmdefault}{\mddefault}{\updefault}3}}}}
\put(2265,5565){\makebox(0,0)[lb]{\smash{{\SetFigFontNFSS{5}{6.0}{\rmdefault}{\mddefault}{\updefault}3}}}}
\put(5715,3165){\makebox(0,0)[lb]{\smash{{\SetFigFontNFSS{5}{6.0}{\rmdefault}{\mddefault}{\updefault}1}}}}
\put(3615,6015){\makebox(0,0)[lb]{\smash{{\SetFigFontNFSS{5}{6.0}{\rmdefault}{\mddefault}{\updefault}1}}}}
\put(4065,3465){\makebox(0,0)[lb]{\smash{{\SetFigFontNFSS{5}{6.0}{\rmdefault}{\mddefault}{\updefault}2}}}}
\put(4215,5565){\makebox(0,0)[lb]{\smash{{\SetFigFontNFSS{5}{6.0}{\rmdefault}{\mddefault}{\updefault}2}}}}
\put(5415,4140){\makebox(0,0)[lb]{\smash{{\SetFigFontNFSS{5}{6.0}{\rmdefault}{\mddefault}{\updefault}3}}}}
\put(4815,5940){\makebox(0,0)[lb]{\smash{{\SetFigFontNFSS{5}{6.0}{\rmdefault}{\mddefault}{\updefault}3}}}}
\put(6240,2415){\makebox(0,0)[lb]{\smash{{\SetFigFontNFSS{5}{6.0}{\rmdefault}{\mddefault}{\updefault}3}}}}
\put(2790,6015){\makebox(0,0)[lb]{\smash{{\SetFigFontNFSS{5}{6.0}{\rmdefault}{\mddefault}{\updefault}1}}}}
\put(2040,4140){\makebox(0,0)[lb]{\smash{{\SetFigFontNFSS{5}{6.0}{\rmdefault}{\mddefault}{\updefault}1}}}}
\put(4365,4140){\makebox(0,0)[lb]{\smash{{\SetFigFontNFSS{5}{6.0}{\rmdefault}{\mddefault}{\updefault}1}}}}
\put(4965,1740){\makebox(0,0)[lb]{\smash{{\SetFigFontNFSS{5}{6.0}{\rmdefault}{\mddefault}{\updefault}2}}}}
\put(3165,5640){\makebox(0,0)[lb]{\smash{{\SetFigFontNFSS{5}{6.0}{\rmdefault}{\mddefault}{\updefault}==}}}}
\end{picture}
}}
\caption{Ordering of generators in a factorization diagram.}
\label{fig-local-example-order}
\end{figure}  

Figure~\ref{table-localA} lists the commutation rules for the local
algorithm $A$ in terms of elementary matrices. These rules apply for
$\{i,j,k\} = \{1,2,3\}$. Rule $(1)$ tells us that there is no local
factorization and we have to stop the algorithm. The matrices $E_{ij}$
and $E_{ji}$ appearing in rule $(1)$ describe the same global
subdivision but with different choices of cones. Rule $(2)$ tells us
to cancel $E_{ij}$  with its inverse. Rules $(3)-(6)$ can be read off from the global diagram of algorithm $A$ in Figure~\ref{fig-alg}. Each of these rules corresponds to one cone in the final refinement. Rule $(6)$ gives us a choice between two different factorizations; the two factorizations are shown in Figure~\ref{fig-local-example2}. Rule $(6b)$ is the only one where there is no consistent labeling of the generators and we need to use the permutation matrix $R_{kji}$. Rule $(7)$ shows how to commute the permutation matrix with the elementary matrices. 

\begin{figure}[h]
\begin{enumerate} 
\item $E^{-1}_{ij}E_{ji} \go {\text\tt stop}$.
\item $E^{-1}_{ij}E_{ij} \go 1$.              
\item $E^{-1}_{ij}E_{kj} \go E_{kj}E^{-1}_{ij}$.   
\item $E^{-1}_{ij}E_{jk} \go E_{jk}E^{-1}_{ik}E^{-1}_{ij}$.
\item $E^{-1}_{ij}E_{ki} \go E_{ki}E_{kj}E^{-1}_{ij}$.
\item[(6a)] $E^{-1}_{ij}E_{ik} \go E_{ik}E^{-1}_{ij}$  
\item[(6b)]                $E^{-1}_{ij}E_{ik} \go E_{ki}E_{jk}R_{kji}E^{-1}_{kj}E^{-1}_{ji}$.
\item[(7)] $R_{kji}E_{lm}^{\pm 1} \go E_{r(l)r(m)}^{\pm 1} R_{kji}$, $r:i\mapsto j\mapsto k\mapsto i$.
\end{enumerate}                                                                 
\caption{Rules for local algorithm $A$.}
\label{table-localA}
\end{figure}  

As explained above, we start with a sequence of elementary matrices and their inverses. The goal is to apply the commutation rules to get all elementary matrices to the left of the inverses of such matrices. We do not care about the location of the permutation matrices $R_{ijk}$; they can be moved to the right or to the left as desired. An example of applying the algorithm is:
\begin{eqnarray*} 
E_{12}^{-1}   \underline{E_{12}^{-1} E_{13}} 
&\stackrel{(6b)}{\longgo}& \underline{E_{12}^{-1} E_{31}} E_{23} R_{321} E^{-1}_{32} E^{-1}_{21} \\
&\stackrel{(5)}{\longgo}& E_{31} E_{32}\underline{E_{12}^{-1} E_{23}} R_{321} E^{-1}_{32} E^{-1}_{21} \\
 &\stackrel{(4)}{\longgo}& E_{31} E_{32}E_{23}E_{13}^{-1} E_{12}^{-1} R_{321} E^{-1}_{32} E^{-1}_{21}.
\end{eqnarray*}
At each step we have underlined the pair to which the rule is applied and the rule number is shown on the arrow. Note that at the first step we chose to apply rule $(6b)$. If at the first step we apply rule $(6a)$, then at the next step we would again have a choice between rules $(6a)$ and $(6b)$. This gives a total of three different factorizations of the initial sequence.

As in the global algorithm, there is a choice of the order in which we apply these rules. Since we want to compare the local and the global algorithms, we have to use the same order in both algorithms. For example, we can always apply the rule at the leftmost place. In the local case when applying rule $(6)$ there is also a choice between $(6a)$ and $(6b)$. Below, when talking about different choices in applying the local algorithm, we always mean the choice between $(6a)$ and $(6b)$; we assume that the order of applying the rules has been fixed.

\begin{conjecture}\label{conj-local-fin}
 The local algorithm $A$ is finite: starting with any sequence of elementary matrices and their inverses, the rules can be applied only a finite number of times for any choices between rules $(6a)$ and $(6b)$ that may occur.
\end{conjecture}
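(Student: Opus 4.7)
The plan is to prove finiteness by constructing a potential function $\Phi$ on sequences of elementary matrices (modulo permutation matrices, which by rule $(7)$ can be pushed freely to either end) that strictly decreases under each rule application, taking values in a well-ordered set. Since rules $(4)$, $(5)$, and especially $(6b)$ can lengthen the sequence, $\Phi$ cannot simply be the length; it must exploit deeper geometric information.

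First I would make systematic use of the geometric interpretation from Figures~\ref{fig-local-example} and~\ref{fig-local-example-mat}: each sequence $S$ realizes as a two-dimensional triangulated diagram whose triangles record the cones in the fans traversed by $S$. Each rule application either leaves the diagram unchanged (rules $(2)$, $(3)$, $(6a)$) or attaches new triangles corresponding to new cones (rules $(4)$, $(5)$, $(6b)$). The algorithm terminates precisely when no pattern $E^{-1}_{ij}E_{kl}$ remains, i.e., when the top boundary of the diagram realizes a strong factorization.

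Second I would build $\Phi$ as a lexicographic pair $(\Phi_1, \Phi_2)$. Fixing the initial cone $\sigma = \langle v_1, v_2, v_3\rangle$, each cone $\tau = \langle w_1, w_2, w_3\rangle$ produced along $S$ has $w_i = \sum_j a_{ij} v_j$ with $a_{ij} \in \ZZ_{\geq 0}$, and I set $h(\tau) = \sum_{i,j} a_{ij}$. Let $\Phi_1(S)$ be the multiset $\{h(\tau)\}$ over all cones currently in the diagram, ordered by the standard well-founded multiset order, and let $\Phi_2(S)$ be the number of ``bad pairs'' $(p<q)$ in $S$ with $S_p$ an inverse and $S_q$ a positive matrix. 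Rules $(3)$ and $(6a)$ preserve the cones but strictly decrease $\Phi_2$, while rules $(4)$ and $(5)$ introduce new cones of strictly smaller height than the one being refined, strictly decreasing $\Phi_1$.

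The main obstacle will be rule $(6b)$, which introduces four new elementary matrices together with a permutation. The new cones it creates do not obviously sit inside the existing diagram; they may lie in a strictly larger refinement of $\sigma$ than the one determined by the initial data. Bounding the ``spread'' of the algorithm --- that is, exhibiting an a priori finite nonsingular fan refining $\sigma$ inside of which every cone ever produced by algorithm $A$ must lie --- is the crux of the problem. A natural strategy is to induct on the length of the initial sequence, showing that any cone created by a $(6b)$-cascade is controlled by a bounded number of operations derived from the original input. I expect that establishing such a bound is delicate, which is why Conjecture~\ref{conj-local-fin} has resisted direct proof despite extensive computer experimentation.
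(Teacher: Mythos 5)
This statement is an open conjecture in the paper, not a theorem: the authors never prove it, and only establish special cases (finiteness when rule $(6b)$ is forbidden, and finiteness for $S^{-1}T$ with $S$ and $T$ directed, culminating in Theorem~\ref{thm-main}), by exhibiting explicit normal forms (the $\diamond$-shaped sequences built from groups $H(j,k,i)$) rather than by any potential-function argument. Your proposal is likewise not a proof: you yourself leave the decisive case, rule $(6b)$, unresolved ("establishing such a bound is delicate"), so at best this is a program, and the crux you identify is exactly the open crux.

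Moreover, the potential $(\Phi_1,\Phi_2)$ does not decrease even under the rules you claim it handles. With $h(\tau)=\sum_{i,j}a_{ij}$ measured against a fixed initial cone, subdivision replaces a generator by a sum of generators, so the cones created by rules $(4)$ and $(5)$ have \emph{larger} height than the cones already present; and since these rules add cones to the diagram without deleting any, the multiset $\Phi_1$ strictly increases in the multiset order rather than decreases. The nonnegativity $a_{ij}\ge 0$ is also unjustified for an arbitrary word containing inverses, since intermediate cones need not refine the left-hand cone. Nor is $\Phi_2$ controlled: rule $(4)$ creates new inverse letters which sit to the left of whatever positive letters follow in the word, so the count of bad pairs can grow while $\Phi_1$ fails to compensate. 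Finally, a useful sanity check: your skeleton uses nothing specific to the right-hand sides of algorithm $A$'s rules, so essentially the same argument would "prove" finiteness of the local algorithm $B$, which the paper shows runs into a cycle (Figure~\ref{fig-example}(b)); any correct termination proof must exploit the precise combinatorics of $A$'s rules, as the paper's partial results do.
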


Note that the factorization algorithm ends whenever we apply rule $(1)$  {\text\tt stop}, or if there are no more places to apply the rules and we have a local strong factorization. 

The main result we prove in this section is:

\begin{proposition} \label{prop-local-global}
Conjecture~\ref{conj-local-fin} implies Conjecture~\ref{conj-oda}. 
\end{proposition}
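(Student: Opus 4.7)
My plan is to decompose the global algorithm $A$ into a collection of local algorithms running in parallel, one per maximal cone of a common intermediate fan, and then apply the local finiteness hypothesis to each piece. First, by the reductions in Section~2.2 together with the weak factorization theorem for toric varieties, it suffices to show that algorithm $A$ terminates when applied to a global sequence of the form $T^{-1}S$, where $S$ and $T$ are sequences of smooth star subdivisions of a common nonsingular fan $\Delta_0$.

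Next, I will project the sequence to each maximal cone $C$ of $\Delta_0$: let $L(C)$ denote the subsequence of those operations whose subdivision rays lie in refinements of $C$. The key observation is that every step of the global algorithm falls into one of two types. Either it is an \emph{inter-cone swap} transposing an assembly-subdivision pair whose subdivision rays lie in distinct maximal cones of $\Delta_0$ (and so commute trivially), or it is a \emph{local step at $C$} applying one of the rules~$(2)$--$(6)$ of Figure~\ref{table-localA} to a pair of operations whose subdivision rays share a common $3$-cone contained in $C$. In the latter case, the step is exactly one step of the local algorithm $A$ applied to $L(C)$, leaving every other $L(C')$ unchanged.

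Granted this bijection, finiteness will follow by counting. By Conjecture~\ref{conj-local-fin}, the local algorithm terminates on each $L(C)$ for every sequence of choices between $(6a)$ and $(6b)$ that may arise, and summing over the finitely many maximal cones of $\Delta_0$ yields a finite bound on the total number of local steps. Since each local rule inserts at most a bounded number of new elementary matrices, the total length of the global sequence at any point is also bounded. For the inter-cone swaps, each one converts a specific assembly-subdivision pair into subdivision-assembly order, a configuration that the algorithm never tries to commute again; thus each individually tracked pair is inter-cone swapped at most once, and the number of such swaps is bounded by the square of the maximum sequence length.

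I expect the main difficulty to lie in the bookkeeping that underpins the local-global bijection, especially in the presence of the permutation matrices $R_{kji}$ introduced by rule~$(6b)$: one must fix an ordering of the generators of each maximal cone of $\Delta_0$, propagate it consistently through the star subdivisions that occur inside $C$, and verify that each global commutation step at a shared $3$-cone matches exactly the corresponding rule of Figure~\ref{table-localA} applied to $L(C)$. A secondary subtlety is ruling out rule~$(1)$, which would halt a local algorithm run with \texttt{stop} rather than with a factorization; the configuration $E_{ij}^{-1}E_{ji}$ cannot arise in a projection $L(C)$ coming from a genuine global sequence of assemblies followed by subdivisions, because $E_{ij}$ and $E_{ji}$ there would represent the same global star subdivision with opposite shaded subcones, a pattern that the global algorithm does not produce when started on $T^{-1}S$.
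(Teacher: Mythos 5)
There is a genuine gap, and it lies at the heart of your plan: the ``projection'' $L(C)$ to a maximal cone $C$ of $\Delta_0$ is not a run of the local algorithm at all. The local algorithm of Figure~\ref{table-localA} acts on a sequence obtained by choosing a \emph{single} $3$-dimensional cone in each fan of the global sequence (a local subsequence in the sense of the paper); restricting the global sequence to everything happening inside a fixed maximal cone $C$ of $\Delta_0$ still leaves you with a genuinely global problem, because the subdivisions create many subcones inside $C$ and operations in distinct subcones of the refinement are separate columns of the diagram, not consecutive elementary matrices of one local word. (Indeed the hard case of the whole conjecture is already a single cone with two sequences of subdivisions, so ``one local run per maximal cone of $\Delta_0$'' cannot be the right decomposition.) Consequently your counting step fails: a single step of the global algorithm corresponds to one local step on \emph{some} local subsequences and to the identity on all the others, and the number of local subsequences grows as the algorithm proceeds. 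Finiteness of each individual local run therefore does not bound the global run, since an infinite global run could a priori spread its work over ever-new local subsequences, each of which individually terminates. This is exactly the difficulty the paper confronts: it builds the forest of all local subsequences of $S_1, S_2,\ldots$, uses the fact that extensions are unique in the backwards direction to get a tree structure with finite valence, extracts an infinite path by a K\"onig-type argument, and then must still rule out the possibility that all sufficiently deep edges of every infinite path are identities --- which it does by exhibiting, deep in the refinement, a ``small'' cone whose local subsequence forces more than $N$ non-identity edges. Your proposal contains no substitute for this step.

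A secondary error: you claim the configuration $E_{ij}^{-1}E_{ji}$ (rule $(1)$, {\tt stop}) cannot arise from a genuine global sequence of assemblies followed by subdivisions. It can and does: if the chosen cones on the two sides of a wall disagree, the local subsequence terminates in {\tt stop} even though the global factorization proceeds perfectly well with other cones (compare Proposition~\ref{prop-directedij}, where {\tt stop} occurs whenever $m\neq 0$ and $p\neq 0$). The local finiteness conjecture explicitly allows runs ending in {\tt stop}, so nothing needs to be ruled out here --- but the fact that your framework suggests otherwise is another sign that the identification of global steps with local steps has not been set up correctly.
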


\begin{proof}
 Let $S$ be a finite sequence of global star subdivisions and star assemblies. In other words, $S$ is a sequence of fans, each one obtained from the previous one by one star subdivision or one star assembly. A {\em local subsequence} $T$ of $S$ is a choice of a cone in each fan of $S$ such that each cone in the sequence is either equal to the previous cone, or is obtained from the previous cone by a local star subdivision or a local star assembly. A local subsequence gives a sequence of local star subdivisions and star assemblies.

A global sequence $S$ has many local subsequences. In fact, if we choose any cone in the first fan in the sequence $S$, we can extend this choice to a local subsequence $T$. Similarly, if we choose a cone in one fan in the sequence $S$, we can extend this choice to a local subsequence, going in both directions.

Now suppose $S_1$ and $S_2$ are two global sequences and $S_2$ is obtained from $S_1$ by applying one step of the factorization algorithm. The two sequences can be put into a diagram of the same type as in Figure~\ref{fig-example}. If $T_1$ is a local subsequence of $S_1$, we seek to extend it to a local subsequence $T_2$ of $S_2$. By this we mean a choice of cones for each fan of $S_2$ that agrees with the choice $T_1$ on the fans that are the same in both sequences $S_1$ and $S_2$. If such an extension $T_2$ exists, then as sequences of local star subdivisions and assemblies, $T_2$ is either equal to $T_1$ or $T_2$ is obtained from $T_1$ by one step of the local factorization algorithm. The extension of $T_1$ to $T_2$ may not always exist. However, suppose that instead of $T_1$ we are given a local subsequence $T_2$ of $S_2$, then there is always a unique extension of this subsequence to a local subsequence $T_1$ of $S_1$. The reason for this is as follows. When we star subdivide a cone, we have a choice of two cones to turn it into a local star subdivision; but if we star-assemble a cone, there is no choice at all, and it is automatically a local star assembly.  If $T_1$ is the unique extension of $T_2$, then as sequences of local star subdivisions and assemblies, $T_2$ is either equal to $T_1$ or is obtained from it by one step of the local algorithm.

Suppose we have a global sequence of star subdivisions and assemblies $S_1$ on which the algorithm is infinite. Applying the algorithm step-by-step, we construct new sequences $S_2$, $S_3$, $\ldots$. Let us also construct a graph with vertices $S_i$ and edges going from $S_i$ to $S_{i+1}$, indicating that $S_{i+1}$ is obtained from $S_i$ by one step of the algorithm. Next we construct a graph of local subdivisions. The vertices are all local subsequences of $T_i$ of $S_i$ for $i\geq 1$ and there is an edge from $T_i$ to $T_{i+1}$ if $T_i$ is a local subsequence of $S_i$ and $T_{i+1}$ is an extension of this to a local subsequence of $S_{i+1}$. By the discussion above, this graph of local subsequences is a set of trees (a forest) with roots the local subsequences of $S_1$. Since there are infinitely many $S_i$, at least one of the trees of local subsequences must contain an infinitely long  path. We claim that one of such infinitely long paths then corresponds to an infinite number of local factorization steps applied to a local subsequence of $S_1$, implying that the local factorization algorithm is not finite.

The problem with the claim above is that in the graph of trees some
edges correspond to the identity transformation. It is conceivable
that in all infinite paths all edges are eventually identities. Since
the trees have finite valence, this means that for some $N>0$ all
edges that have distance at least $N$ from the roots are
identities. To get a contradiction, we can find in some fan in $S_M$ for large $M$ a cone that is very ``small'' in the sense that it takes more than $N$ local subdivisions to reach this cone from any cone in a fan in $S_1$. We extend this choice of a cone to a local subsequence $T_M$ of $S_M$. By construction, this $T_M$ can be reached from a root by more than $N$ non-identity edges, which is a contradiction.   
\end{proof}

The proof of the previous proposition also applies to algorithm $B$ of the introduction and its corresponding local version. Since we know that the global algorithm $B$ is not finite, the same must be true for the local algorithm. Figure~\ref{table-localB} lists the rules for the local algorithm $B$. The rules are similar to the rules of algorithm $A$. In rule $(6)$ we again have a choice between $(6a)$ and $(6b)$. In this algorithm there is no need for the permutation matrix. A local factorization corresponding to  Figure~\ref{fig-example}(b) is:
\begin{eqnarray*} 
E_{13}^{-1}   \underline{E_{12}^{-1} E_{13}} 
&\stackrel{(6b)}{\longgo}& \underline{E_{13}^{-1} E_{32}} E_{13} E^{-1}_{32}  \\
 &\stackrel{(4)}{\longgo}& E_{32} E_{13}^{-1}   \underline{E_{12}^{-1} E_{13}} E^{-1}_{32}.
\end{eqnarray*}
The middle three matrices in the last sequences are the same as in the
original sequence, hence this algorithm can be repeated cyclically. 

\begin{figure}[h]
\begin{enumerate} 
\item $E^{-1}_{ij}E_{ji} \go {\text\tt stop}$.
\item $E^{-1}_{ij}E_{ij} \go 1$.              
\item $E^{-1}_{ij}E_{kj} \go E_{kj}E^{-1}_{ij}$.   
\item $E^{-1}_{ij}E_{jk} \go E_{jk}E^{-1}_{ij}E^{-1}_{ik}$.
\item $E^{-1}_{ij}E_{ki} \go E_{kj}E_{ki}E^{-1}_{ij}$.
\item[(6a)] $E^{-1}_{ij}E_{ik} \go E_{jk}E^{-1}_{ij}E^{-1}_{jk} $.  
\item[(6b)] $E^{-1}_{ij}E_{ik} \go E_{kj}E_{ik}E^{-1}_{kj}$.             
\end{enumerate}                                                                 
\caption{Rules for local algorithm $B$.}
\label{table-localB}
\end{figure}  

The local rules for the two algorithms are also valid in dimension $n>3$. We need to assume that $\{i,j,k\}$ forms a $3$-element subset of $\{1,2,\ldots,n\}$, and we need one additional rule
\[  E^{-1}_{ij}E_{kl} \go E_{kl}E^{-1}_{ij},\]
where $i,j,k,l$ are distinct.

\section{Finiteness results}

We prove finiteness of algorithm $A$ in some cases.  The term ``algorithm'' always refers to algorithm $A$. We denote by $S, T, \ldots$ sequences of elementary matrices with positive powers. We also assume everywhere that $\{i,j,k\}=\{1,2,3\}$.

\subsection{Directed sequences}

We call a sequence $S$ {\em directed toward $i$} (or simply {\em directed}) if it consists of elementary matrices $E_{ij}$ and $E_{ik}$. Thus, a directed sequence has the form
\[ S= E_{ij}^{m_1} E_{ik}^{n_1}E_{ij}^{m_2} \ldots  E_{ik}^{n_l}.\]
Our main goal in this section is to prove that if $S$ and $T$ are both directed, not necessarily toward the same $i$, then the local algorithm is finite on $S^{-1} T$.

Let us first add an extra rule to the algorithm that is useful when factoring directed sequences. The rule
 \begin{enumerate} 
\item[(8)] $E_{ij}E_{ik} \go E_{ik}E_{ij}$.   
\end{enumerate}     
allows us to commute $E_{ij}$ with $E_{ik}$. We claim that rule $(8)$ ``commutes'' with the factorization algorithm in the following sense. Suppose $U$ is a sequence of elementary matrices and their inverses and $V$ is obtained from $U$ by applying the rule $(8)$ once. Then the factorizations of $U$ are in one-to-one correspondence with factorizations of $V$ and and the latter ones are obtained from the former ones by one application of rule $(8)$. In particular, if all factorizations of $U$ are finite, then all factorizations of $V$ are also finite. To prove this claim, it suffices to compare factorizations of $E^{-1}_{\alpha\beta} E_{ij}E_{ik}$ with factorizations of $E^{-1}_{\alpha\beta} E_{ik}E_{ij}$ for different indices $\alpha,\beta$. We do the case $E_{\alpha\beta}=E_{jk}$ only and leave the other cases to the reader.
\begin{alignat*}{3}
 E^{-1}_{jk} E_{ij}E_{ik} &\go  E_{ij}E_{ik}E^{-1}_{jk}E_{ik} &\go E_{ij}E_{ik} E_{ik}E^{-1}_{jk} \\
 E^{-1}_{jk} E_{ik}E_{ij} &\go  E_{ik}E^{-1}_{jk}E_{ij} &\go E_{ik}E_{ij} E_{ik}E^{-1}_{jk} .
\end{alignat*}
Clearly the two factorizations differ by one application of rule $(8)$.

Using rule $(8)$ we can write a directed sequence as 
\[ S= E_{ij}^{m} E_{ik}^{n}.\]
Note also that rule $(8)$ keeps a directed sequence directed and an
undirected sequence undirected.

\begin{proposition} \label{prop-directedij}
 Let $S$ be a sequence directed toward $i$ and $T$ a sequence directed toward $j$, where $i\neq j$. Then $S^{-1} T$ has at most one factorization, which is of the form $T_1 S_1^{-1}$, where $S_1$ is directed toward $i$ and $T_1$ is directed toward $j$.
\end{proposition}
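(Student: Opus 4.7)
The plan is to first normalize the two sequences using the auxiliary rule $(8)$. Since rule $(8)$ is shown in the paper to commute with the factorization algorithm, we may replace $S$ and $T$ by their canonical forms $S = E_{ij}^m E_{ik}^n$ and $T = E_{ji}^p E_{jk}^q$, where $\{i,j,k\}=\{1,2,3\}$ and $m,n,p,q \geq 0$. Thus $S^{-1}T = E_{ik}^{-n} E_{ij}^{-m} E_{ji}^p E_{jk}^q$, and the behavior of the algorithm on it can be analyzed by splitting on whether $m$ or $p$ vanishes.

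The central observation I would use to settle uniqueness is a \emph{first-index invariant}: throughout any run of the algorithm starting from $S^{-1}T$, every inverse factor has first index $i$ and every non-inverse factor has first index $j$. One checks this is preserved by each of rules $(3), (4), (5)$ and by rule $(8)$, simply by reading off the outputs. Because rule $(6)$ applies only to pairs $E^{-1}_{\alpha\beta} E_{\alpha\gamma}$ whose first indices coincide (both equal to $\alpha$), while here the two first indices are the distinct values $i \neq j$, rule $(6)$ is never triggered. Hence the algorithm is deterministic, and any factorization it produces is unique.

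The case analysis then proceeds as follows. If $m \geq 1$ and $p \geq 1$, the unique $(E^{-1},E)$-adjacency in $S^{-1}T$ is the pair $E_{ij}^{-1} E_{ji}$ at the join of $S^{-1}$ and $T$; this triggers rule $(1)$ and the algorithm halts with no factorization, so ``at most one'' holds vacuously. If $m=p=0$, then $S^{-1}T = E_{ik}^{-n} E_{jk}^q$, and $nq$ applications of rule $(3)$ produce $E_{jk}^q E_{ik}^{-n}$. If $m=0$ and $p \geq 1$, I would induct on $n$, using rule $(5)$ to push each $E_{ik}^{-1}$ past an $E_{ji}$ (which generates an extra $E_{jk}$) and rule $(3)$ to push past $E_{jk}$, reducing the expression to $E_{ji}^p E_{jk}^{np+q} E_{ik}^{-n}$. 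Symmetrically, if $m \geq 1$ and $p=0$, an induction using rule $(4)$ for $E_{ij}^{-1}$ past $E_{jk}$ (creating an $E_{ik}^{-1}$) together with rule $(3)$ reduces the expression to $E_{jk}^q E_{ij}^{-m} E_{ik}^{-(n+mq)}$. In all three non-vacuous cases the output is visibly of the form $T_1 S_1^{-1}$ with $T_1$ directed toward $j$ and $S_1$ directed toward $i$.

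The main obstacle will be the bookkeeping in the inductions of the two mixed cases: each passage of an inverse factor across a non-inverse can introduce extra copies of $E_{ik}^{-1}$ or $E_{jk}$, and one has to track how these accumulate so as to confirm the precise closed forms with exponents $n+mq$ and $np+q$. A sanity check I would keep in mind is that the matrix product $S^{-1}T$ is easy to compute directly, and comparing entries with the matrix $T_1 S_1^{-1}$ (for $T_1, S_1$ in canonical directed form) forces the exponents uniquely; this both corroborates the closed forms predicted by the algorithm and provides an independent verification of uniqueness.
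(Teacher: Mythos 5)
Your proposal is correct and follows essentially the same route as the paper: normalize $S$ and $T$ via rule $(8)$ to $E_{ij}^m E_{ik}^n$ and $E_{ji}^p E_{jk}^q$, then split into the cases $m=0$, $p=0$, and $m,p>0$ (which triggers rule $(1)$, {\tt stop}), arriving at the same closed forms (up to commuting the mutually commuting inverse factors, i.e.\ up to rule $(8)$). Your first-index invariant, which shows that rule $(6)$ never fires so the run is deterministic, simply makes explicit the uniqueness claim that the paper leaves implicit.
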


\begin{proof} Let 
\begin{gather*}
 S= E_{ij}^{m} E_{ik}^{n},\\
 T= E_{ji}^{p} E_{jk}^{q}.
\end{gather*}
Then $S^{-1} T$ factors as
\[ S^{-1} T = E_{ik}^{-n} E_{ij}^{-m} E_{ji}^{p} E_{jk}^{q} \go 
 \begin{cases} 
E_{ji}^{p} E_{jk}^{q+np} E_{ik}^{-n} & \text{if $m=0$}\\
E_{jk}^{q}E_{ik}^{-n-mq}  E_{ij}^{-m} & \text{if $p=0$}\\
           {\text\tt stop}                     & \text{if $m\neq0, p\neq 0$}                                 
\end{cases}
\]
\end{proof}

Now consider the case where $S$ and $T$ are both directed toward $i$:
\begin{gather*}
 S= E_{ij}^{m} E_{ik}^{n},\\
 T= E_{ij}^{p} E_{ik}^{q}.
\end{gather*}
To factor $S^{-1} T$, we can first use rule $(8)$ and rule $(2)$ to cancel elementary matrices with their inverses. Depending on the values of $m,n,p,q$, this brings us to the following $4$ cases:
\[ S^{-1} T \go
 \begin{cases}
  E_{ij}^{a} E_{ik}^{b}\\
  E_{ij}^{-a} E_{ik}^{-b}\\
  E_{ij}^{-a} E_{ik}^{b}\\
  E_{ik}^{-a} E_{ij}^{b}\\
 \end{cases}
\]
for some $a,b\geq 0$. In the first two cases there is nothing more to do. In the last two cases, if we only use rule $(6a)$, we can factor:
\begin{gather*}
 E_{ij}^{-a} E_{ik}^{b} \go E_{ik}^{b} E_{ij}^{-a}\\
 E_{ik}^{-a} E_{ij}^{b} \go E_{ij}^{b} E_{ik}^{-a}.
\end{gather*}

From this we get:

\begin{proposition} \label{prop-directedii}
 Let $S$ and $T$ be two sequence directed toward $i$. If we do not use rule $(6b)$ then $S^{-1} T$ has a unique factorization  $T S^{-1}$.
\end{proposition}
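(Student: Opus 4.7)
The plan is to show that under the hypothesis, only three rules can ever fire on $S^{-1}T$, and that these deterministically drive it to the claimed factored form $TS^{-1}$.

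The first step is to observe that every elementary matrix appearing in $S^{-1}T$ has first index $i$, so every adjacent pair $E_{\alpha\beta}^{-1}E_{\gamma\delta}$ encountered during the algorithm satisfies $\alpha=\gamma=i$. Inspecting the rule table, this eliminates rules (1), (3), (4), (5), and (7), since each of them requires an index configuration incompatible with a common first index. Since rule (6b) is forbidden by hypothesis, only rules (2), (6a), and the auxiliary commutation rule (8) can ever fire, and each of them preserves the property that all matrices have first index $i$.

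The second step is a finiteness argument via the potential on sequences defined as length plus the number of ordered inverse-positive pairs (not necessarily adjacent). Rule (2) decreases length by two and can create at most one new inverse-positive pair at the fresh adjacency, so it strictly decreases the potential; rule (6a) preserves length and strictly decreases the number of inversions. Rule (8) leaves the potential unchanged, but by the commutation principle established after Proposition~\ref{prop-directedij} it suffices to apply it once, as preprocessing, to put $S^{-1}T$ in the normalized form $E_{ik}^{-n}E_{ij}^{-m}E_{ij}^{p}E_{ik}^{q}$. After this, the potential strictly decreases at every remaining step, forcing termination.

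Finally, the explicit computation is precisely the case analysis sketched in the paragraph preceding the proposition. Rule (2) cancels as much of $E_{ij}^{-m}E_{ij}^{p}$ as possible, producing one of the four listed reduced sequences, and rule (6a) then straightens out any remaining inverse-positive adjacency. Since every rule is a matrix identity and $E_{ij}$ commutes with $E_{ik}$, the output equals $S^{-1}T = E_{ij}^{p-m}E_{ik}^{q-n}$ as a matrix, which is exactly the reduced form of $TS^{-1}$. Uniqueness follows because, within this restricted rule set, the deterministic procedure always produces a fully factored sequence in $E_{ij}^{\pm1}$ and $E_{ik}^{\pm1}$, and such a sequence canonicalised by rule (8) is uniquely determined by its matrix product. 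The main subtlety is isolating the role of rule (8) to the one-time normalization step, which is exactly what the potential argument accomplishes.
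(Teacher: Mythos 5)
Your proposal is correct and takes essentially the same route as the paper: once $(6b)$ is banned you observe that only rules $(2)$, $(6a)$ and the auxiliary rule $(8)$ can ever fire (the right reason rule $(7)$ is excluded being that no permutation matrix is ever introduced without $(6b)$, rather than an index clash), normalize with $(8)$, and read the unique factorization off the same four-case cancellation/commutation computation that constitutes the paper's proof. The potential-function termination argument is a harmless extra; the paper simply notes the explicit reduction terminates.
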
 \qed

\begin{corollary} 
When rule $(6b)$ is removed from the local algorithm $A$, then the algorithm is finite.
\end{corollary}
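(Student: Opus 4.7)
The plan is to prove termination of the reduced rewriting system (rules $(1)$--$(6a)$) by exhibiting a monovariant on sequences that strictly decreases under each non-halting rule application. Rule $(1)$ halts and rule $(2)$ shortens the sequence; rules $(3)$ and $(6a)$ are pure commutations. The genuine difficulty is that rules $(4)$ and $(5)$ are length-increasing: they insert a new inverse $E^{-1}_{ik}$ (respectively a new positive $E_{kj}$) when resolving the bad adjacency $E^{-1}_{ij} E_{jk}$ (resp.\ $E^{-1}_{ij} E_{ki}$), so neither the length nor the naive count of inv-pos inversions is monovariant.

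I would first set up a lexicographic monovariant $\mu(w) = (A(w), B(w))$ taking values in $\mathbb{N}^2$, in which $A$ is a global quantity that strictly decreases under the length-increasing rules $(4), (5)$, and $B$ is a displacement-type quantity that strictly decreases under $(2), (3), (6a)$ while being preserved (or decreased) by the other rules. A natural form for $A(w)$ is a weighted sum over pairs $(q, p)$ with $q$ an inverse to the left of a positive $p$, where the weight exploits the specific index constraint on the letter created by rule $(4)$ or $(5)$: the inverse $E^{-1}_{ik}$ produced by rule $(4)$ has its indices determined by (and restricted to) combinations of indices from the consumed pair, so a weighting that penalizes pairs capable of triggering further length-increasing rules will assign the new letter a strictly smaller contribution than the pair it replaced.

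The key verification is that $A$ really does strictly decrease under rules $(4)$ and $(5)$, despite the new letter potentially forming fresh inv-pos pairs with each positive in the right-hand context $Y$. To handle this, I would leverage Proposition~\ref{prop-directedii}: within any directed subsequence the algorithm terminates with a unique factorization $T S^{-1}$ whose length is controlled by that of $S^{-1} T$. This lets me bound the cascading effect of the new letter inside any directed sub-block of $Y$, so that an explicit choice of weight (for example, exponentially decaying in the number of positives of "chain type" to the right) forces the contribution of $E^{-1}_{ik}$ to stay strictly below that of the original $E^{-1}_{ij} E_{jk}$ pair, and similarly for rule $(5)$ via Proposition~\ref{prop-directedij}.

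The main obstacle is precisely the construction of this weight and the case analysis required to verify strict decrease of $A$ under $(4)$ and $(5)$ for every possible type of letter appearing in $Y$. Once this verification is carried out, $B$ can be taken to be a standard displacement statistic (such as the sum over positives of the number of inverses to their left) that visibly decreases under every commutation step while remaining invariant under rules that change $A$; then $\mu$ takes values in the well-founded lex order on $\mathbb{N}^2$, and the algorithm must terminate in finitely many steps.
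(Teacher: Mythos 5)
There is a genuine gap: your argument is a plan for a proof rather than a proof. The entire difficulty is concentrated in the construction of the weight $A$ and the verification that it strictly decreases under the length-increasing rules $(4)$ and $(5)$, and you explicitly defer exactly this step (``the main obstacle is precisely the construction of this weight and the case analysis''). Nothing in the proposal pins down the weighting, and the informal description (``exponentially decaying in the number of positives of chain type to the right'') is not enough to check the decrease, precisely because the new letter $E^{-1}_{ik}$ created by rule $(4)$ (resp.\ $E_{kj}$ created by $(5)$) forms fresh inversion pairs with the whole right-hand context and can itself later trigger rules $(4)$ and $(5)$ again. In addition, your auxiliary claim about $B$ is false as stated: rule $(4)$ rewrites $E^{-1}_{ij}E_{jk} \go E_{jk}E^{-1}_{ik}E^{-1}_{ij}$, inserting a new inverse to the left of every remaining positive letter, so the displacement statistic ``sum over positives of the number of inverses to their left'' increases rather than stays invariant whenever there are positives further right; this is tolerable in a lexicographic order only if the unproved decrease of $A$ is actually established.

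The paper's proof shows that no global monovariant is needed once rule $(6b)$ is discarded: it reduces to factoring $S^{-1}T$ with $S$ directed (for instance a single elementary matrix), decomposes $T$ into directed blocks $T=T_1T_2\cdots T_n$, and applies Propositions~\ref{prop-directedij} and~\ref{prop-directedii}, which give an explicit, finite, essentially unique factorization $S^{-1}T_1 \go U V^{-1}$ with $U$ and $V$ directed; induction on the number of blocks $n$ then finishes the argument, since the remaining work is the factorization of $V^{-1}T_2\cdots T_n$ with $V$ again directed. You already cite both propositions, but only as tools to estimate a hypothetical cascading effect; used directly, they make the termination argument a short structural induction and render the weight construction unnecessary. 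As it stands, your proposal would only become a proof after supplying the missing weight and the full case analysis, which is the hard content it currently omits.
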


\begin{proof}
 It suffices to prove finiteness of $S^{-1} T$, where $S$ consists of one elementary matrix, or more generally, where $S$ is directed. Divide $T$ into directed sequences $T=T_1 T_2 \ldots T_n$. By previous propositions we know that 
\[  S^{-1} T_1 \go U V^{-1},\]
where both $U$ and $V$ are directed. By induction on $n$ the factorization of  $V^{-1} T_2 \ldots T_n$ is finite.
\end{proof}

To finish proving finiteness of the algorithm on $S^{-1} T$ where $S$ and $T$ are directed, the only case remaining is when 
\[ S^{-1} T = E_{ij}^{-m} E_{ik}^n\]
and we are allowed to use the full algorithm. 

\subsection{Factorization of $E_{ij}^{-m} E_{ik}^n$}

We will prove below that all factorizations of $E_{ij}^{-m} E_{ik}^n$ are finite. Since we are allowed to use both rules $(6a)$ and $(6b)$, there are in general many factorizations, and the number of different factorizations grows rapidly with $m$ and $n$. The table in Figure~\ref{table-number} lists the number of different factorizations (not counting the ones ending in {\text\tt stop}) for different values of $m=n$. These numbers were found using a computer.

\begin{figure}[h]
\begin{tabular}{c|c}
m=n & number of factorizations\\
\hline
1 & 2 \\ 
2 & 6 \\
3 & 16 \\
5 &  68\\
10 & 658 \\
20 & 8094 \\
30 & 37,322 \\
40 & 112,610 \\
\end{tabular}
\caption{Number of different factorizations of $E_{ij}^{-m} E_{ik}^n$.}
\label{table-number}
\end{figure}  

A {\em group} $H(j,k,i)$ is a sequence of elementary matrices of the form
\[ H(j,k,i)= (E_{jk}E_{ji})^{m_1} E_{kj}^{n_1} (E_{jk}E_{ji})^{m_2} E_{kj}^{n_2} \ldots (E_{jk}E_{ji})^{m_l} E_{kj}^{n_l} E_{jk} E_{ij}\]
for some $n_i, m_i, l \geq 0$. To have a unique expression for
$H(j,k,i)$ as above, we require that all $m_i, n_i >0$, except
possibly $m_1$ and $n_l$, and also that $l>0$. The shortest group is
simply  $H(j,k,i)=E_{jk} E_{ij}$. A {\em partial group} $H_p(j,k,i)$
is an initial segment in a group $H(j,k,i)$. We sometimes write
$H_p(j,k,i)_{\alpha\beta}$ to indicate that the partial group ends
with letter $E_{\alpha\beta}$. 

\begin{theorem} \label{thm-main}
 All factorizations of $E_{ij}^{-m} E_{ik}^n$ are finite and  if
\[ E_{ij}^{-m} E_{ik}^n \go T S^{-1},\]
then either $T=E_{ik}^n$ or $T$ has the form
\[ T= E_{ik}^q E_{ki} (H(j,k,i)R_{jki})^r H_p(j,k,i)\]
for some $q,r\geq 0$.
\end{theorem}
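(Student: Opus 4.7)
The plan is to prove both the finiteness and the structural description of $T$ by strong induction on $m+n$. The base case $m=0$ (or $n=0$) is immediate: $E_{ij}^{-m}E_{ik}^n$ is already a strong factorization with $T=E_{ik}^n$ (or $T$ empty), matching the first alternative.

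For the inductive step, observe that the only inverse--positive pair in $E_{ij}^{-m}E_{ik}^n$ is the central $E_{ij}^{-1}E_{ik}$, so the algorithm's first step is an application of rule (6), offering a choice between (6a) and (6b). Along the pure (6a) branch one checks directly that each $E_{ik}$ bubbles past all of the $E_{ij}^{-1}$'s, terminating with $T=E_{ik}^n$. Otherwise, (6b) is applied for the first time after some $k\geq 0$ preceding applications of (6a). When $k=m$, the state has the form $E_{ik}\cdot E_{ij}^{-m}E_{ik}^{n-1}$, and applying the inductive hypothesis to $E_{ij}^{-m}E_{ik}^{n-1}$ delivers the factorization, the extra $E_{ik}$ contributing to the $E_{ik}^q$ prefix. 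When $k<m$, the state is $E_{ij}^{-(m-k)}E_{ik}E_{ij}^{-k}E_{ik}^{n-1}$, and applying (6b) to the trapped $E_{ij}^{-1}E_{ik}$ pair gives
\[ E_{ij}^{-(m-k-1)}E_{ki}E_{jk}R_{kji}E_{kj}^{-1}E_{ji}^{-1}E_{ij}^{-k}E_{ik}^{n-1}. \]
The algorithm then processes the newly created inverse--positive pairs using rules (3)--(5) (chiefly rule (5) on $E_{ij}^{-1}E_{ki}$, which spawns additional $E_{ki}E_{kj}$ blocks as it absorbs the prefix, and rule (4) on $E_{ij}^{-1}E_{jk}$), while rule (7) transports $R_{kji}$ through the remaining matrices. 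The decisive observation is that the matrices accumulating on the positive side form exactly an initial segment of the group $H(j,k,i)$, appended to the $E_{ik}^q E_{ki}$ already present.

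Each subsequent (6b) application plays one of two roles: it either completes the current partial group $H_p$ into a full group $H(j,k,i)$ (the permutation $R_{jki}$ marking the completion, arising from normalization of a fresh $R_{kji}$ via rule (7)) and opens a new group, or it leaves the current $H_p$ as the final block of $T$. Finiteness in each branch follows because, after processing the effects of the first (6b), the residual subexpression is a strictly smaller instance of the theorem in $m+n$, to which the inductive hypothesis applies.

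The hard part will be the bookkeeping needed to verify that, at each intermediate stage, the positive part of the expression matches an initial segment of the claimed normal form, and that rule (7) transports $R_{kji}$ to produce precisely the indices demanded by $H(j,k,i)$ and $R_{jki}$. This is likely to require a secondary induction on the length of the unresolved inverse tail $E_{kj}^{-1}E_{ji}^{-1}\cdots$, or a monovariant tied to the distance to the next group boundary, since the outer induction on $m+n$ does not by itself track the fine block structure of $H(j,k,i)$. Once this structural invariant is identified and shown to be preserved under each of the rules (3)--(7), both termination and the claimed form of $T$ follow simultaneously from the induction.
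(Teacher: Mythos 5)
Your outline correctly identifies the two branches at each rule~(6) choice and correctly guesses that the positive side should accumulate into the group structure $H(j,k,i)$, but as written it has a genuine gap at exactly the point where the real work lies. After the first application of (6b) the expression becomes, e.g.,
$E_{ij}^{-(m-k-1)}E_{ki}E_{jk}R_{kji}E_{kj}^{-1}E_{ji}^{-1}E_{ij}^{-k}E_{ik}^{n-1}$,
which is \emph{not} an instance of $E_{ij}^{-m'}E_{ik}^{n'}$ for any relabelling of indices: it contains inverses $E_{kj}^{-1},E_{ji}^{-1}$ mixed with positives $E_{ki},E_{jk},E_{ik}$ and a permutation matrix. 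So your claim that ``the residual subexpression is a strictly smaller instance of the theorem in $m+n$, to which the inductive hypothesis applies'' fails; the induction hypothesis, as you state it, simply does not cover the expressions that arise. To make this work you must strengthen the statement being proved, and that strengthening is precisely what you defer as ``bookkeeping'': one needs an invariant class of sequences (the form $\diamond$, $T=E_{ik}^qE_{ki}(H(j,k,i)R_{jki})^rH_p(j,k,i)$) together with a separate lemma asserting that a \emph{single} inverse $E_{ij}^{-1}$ (resp.\ $E_{ik}^{-1}R_{jki}$) placed against such a group-structured tail factors finitely and returns a sequence again of form $\diamond$. That lemma is proved by its own induction on the number of matrices to the right of the inverse, alternating between two configurations, and the main theorem then needs an explicit case analysis (whether the number $q$ of (6a)-commutations is $<$, $=$, or $>$ the first exponent $m_1$ of the group, and whether $n_1=0$), including tracking the permutation matrices through rule (7) (e.g.\ using $R_{kji}=R_{jki}^2$) and identifying where the \texttt{stop} rule fires. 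None of this is supplied or even precisely formulated in your proposal; you acknowledge a ``structural invariant'' and a ``secondary induction'' are needed but do not exhibit them, and without them neither termination nor the claimed normal form of $T$ is established.

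In short: the high-level strategy (induction plus a normal form for the positive part) is the right one and agrees with the paper's, but the proposal stops before the essential step — formulating the strengthened hypothesis about factoring one inverse against a $\diamond$-form sequence and verifying, rule by rule and case by case, that the form is preserved. Also note that the paper's outer induction is on $m$ alone (peel off the leftmost $E_{ij}^{-1}$ and apply it to an already-factored $T$ of form $\diamond$), which is a cleaner organization than your ``first (6b) occurrence'' decomposition, since it feeds directly into the single-inverse lemma.
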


Note that since the algorithm is symmetric, $S$ in the statement of the theorem has the same form as $T$ (with indices $j$ and $k$ interchanged). The case $T=E_{ik}^n$ occurs when we commute $E_{ij}^{-m}$ with  $E_{ik}^n$ using rule $(6a)$. The other form of $T$ occurs when we apply rule $(6b)$ at least once.

Let us say that a sequence $T$ has the form $\diamond$ if it is as in
the theorem: 
\[ T= E_{ik}^q E_{ki} (H(j,k,i)R_{jki})^r H_p(j,k,i). \qquad (\diamond) \]
Given such a $T$, we write $T_{\alpha\beta}$ to indicate that the last
symbol of $T$ is $E_{\alpha\beta}$. 

We start with an auxiliary lemma.

\begin{lemma} \label{lem-aux}
Consider sequences
\begin{enumerate}
\item[(a)] $T_{kj} E^{-1}_{ij} (H(j,k,i)R_{jki})^r H_p(j,k,i),$
\item[(b)] $T_{jk} E^{-1}_{ik} R_{jki}(H(j,k,i)R_{jki})^r H_p(j,k,i),$
\end{enumerate}
where T is of the form $\diamond$. The algorithm is finite on both sequences
and produces factorizations $T_1 S^{-1}$, where $T_1$ again has the
form $\diamond$. 
\end{lemma}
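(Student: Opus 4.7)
The plan is a simultaneous strong induction on the two cases (a) and (b), with complexity measure $\mu = r\cdot|H(j,k,i)| + |H_p|$ equal to the total length of the right-hand group structure $(H(j,k,i)R_{jki})^r H_p(j,k,i)$. The argument tracks how the leading inverse ($E^{-1}_{ij}$ in case (a), $E^{-1}_{ik}R_{jki}$ in case (b)) propagates rightward through the block decomposition
\[H(j,k,i) = (E_{jk}E_{ji})^{m_1}E_{kj}^{n_1}\cdots (E_{jk}E_{ji})^{m_l}E_{kj}^{n_l}E_{jk}E_{ij},\]
while the prefix $T$ absorbs the consumed letters and preserves its $\diamond$ form.

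First I process the initial block in case (a), where the relevant pair is $E^{-1}_{ij}$ with the first letter of the group sequence. A block $E_{kj}^{n_i}$ is consumed letter-by-letter by rule (3), each application directly extending the partial group $H_p$ inside $T$. A block $(E_{jk}E_{ji})^{m_i}$ is consumed pair-by-pair: rule (4) applied to $E^{-1}_{ij}E_{jk}$ yields $E_{jk}E^{-1}_{ik}E^{-1}_{ij}$, and the auxiliary $E^{-1}_{ik}$ must then be matched with the following $E_{ji}$ via rule (5), which rewrites it as $E_{ji}E_{jk}E^{-1}_{ik}$ and extends the partial group by the pair $E_{jk}E_{ji}$. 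The trailing $E_{jk}E_{ij}$ of a full copy of $H$ is absorbed by rule (4) followed by cancellation via rule (2) against the terminal $E_{ij}$. In every case the block is absorbed cleanly, extending $T$'s partial group by precisely one block of $H(j,k,i)$, so that $T_1$ stays of form $\diamond$; on the right the auxiliary inverses accumulate, symmetrically producing an $S^{-1}$ whose positive reverse $S$ is also of form $\diamond$.

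The interplay with the permutation matrix $R_{jki}$ produces the interlock between (a) and (b). Every copy of $H$ in the sequence is followed by $R_{jki}$, which is commuted through the subsequent letters by rule (7); this relabels their indices via the cycle associated to $R_{jki}$, converting the structure $H(j,k,i)$ into a cyclically-relabeled version of itself. As a consequence, a configuration of type (a) after consumption of one $HR_{jki}$ factor becomes a configuration of type (b) with permuted indices, and conversely for (b). This is precisely why the two cases cannot be decoupled. After one full $HR_{jki}$ factor is absorbed, $\mu$ drops strictly, so the inductive hypothesis closes the argument; the base case ($r=0$ and $|H_p|=1$) reduces to a single application of rule (3) or rule (4).

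The main obstacle is the careful verification that, at each block transition, the auxiliary inverses produced by rule (4) can be paired with their intended partners via rule (5) before the configuration $E^{-1}_{ij}E_{ji}$ forms and triggers rule (1) (stop). The paper's observation that the algorithm's end result is independent of the order of rule applications gives flexibility in choosing a stop-avoiding order, but showing that a consistent such order exists across every block and every case-switch across a permutation requires bookkeeping. The bookkeeping is made tractable by the uniform structure of the blocks of $H$: each $(E_{jk}E_{ji})^{m_i}$ and $E_{kj}^{n_i}$ block has its own absorption pattern, and patching them together across permutations reduces to matching indices by rule (7). Finiteness is then immediate, since each absorbed block strictly decreases $\mu$, and the partial-group structure on both sides of the resulting factorization is exactly the $\diamond$ form by construction.
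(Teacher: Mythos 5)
There is a genuine error at the heart of your block-absorption scheme in case (a). You claim a block $(E_{jk}E_{ji})^{m_i}$ is consumed pair-by-pair: rule (4) on $E^{-1}_{ij}E_{jk}$ gives $E_{jk}E^{-1}_{ik}E^{-1}_{ij}$, and then ``the auxiliary $E^{-1}_{ik}$ is matched with the following $E_{ji}$ via rule (5).'' But the rules only apply to an \emph{adjacent} pair consisting of an inverse followed by a positive elementary matrix, and after rule (4) the letter immediately to the right of the inverses is $E_{ji}$, adjacent to $E^{-1}_{ij}$, not to $E^{-1}_{ik}$. The only applicable pair is $E^{-1}_{ij}E_{ji}$, which is rule (1), i.e.\ {\tt stop}. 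Since the sequences in the lemma contain a single inverse, there is no other position where any rule could be applied, so no ordering of rule applications can rescue this; your appeal to ``flexibility in choosing a stop-avoiding order'' also misreads the paper, which fixes the order of application in advance and only leaves the choice between (6a) and (6b) free. The paper's own proof handles this configuration by simply observing that when $m_1>0$ in case (a) the algorithm terminates with {\tt stop} (which is all that finiteness requires), and then does the real work in the subcases $m_1=0$ (rule (3) on the $E_{kj}$-letters, then rules (4) and (2) on the terminal $E_{jk}E_{ij}$, passing to case (b)) and in case (b), where the analysis splits according to $m_1>0$, $m_1=0,\ n_1>0$, and $m_1=n_1=0$, with careful bookkeeping of the permutation matrices.

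Because of this, your inductive step does not go through as stated: the claimed clean absorption of $(E_{jk}E_{ji})^{m_i}$ blocks never happens, the passage between cases (a) and (b) is not triggered by absorbing an $H R_{jki}$ factor as you describe (in the paper it occurs already at the terminal $E_{jk}E_{ij}$ of a group, and case (b) can also return to case (a)), and the asserted $\diamond$ form of $T_1$ is derived from a rewriting that the algorithm cannot perform. The overall strategy (simultaneous induction on (a) and (b) with a length-type measure on the part right of the inverse) does match the paper, but the case analysis that makes it work is missing or incorrect.
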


\begin{proof} 
Note that both sequences have a single inverse
elementary matrix in them. We prove both parts of the lemma
simultaneously by induction on the number of elementary matrices to
the right of the inverse. 

Consider first the sequence $(a)$:
\[ T_{kj} E^{-1}_{ij} (H(j,k,i)R_{jki})^r H_p(j,k,i) = T_{kj}
E^{-1}_{ij} (E_{jk}E_{ji})^{m_1} E_{kj}^{n_1} \ldots.\]
If $m_1>0$, the factorization stops with $E^{-1}_{ij}E_{ji}$, so
assume $m_1=0$. If $n_1>0$, we apply one step of the algorithm:
\[ T_{kj} E^{-1}_{ij} E_{kj}^{n_1} \ldots \go  T_{kj}E_{kj}
E^{-1}_{ij} E_{kj}^{n_1-1} \ldots.\]
We can combine $T_{kj}E_{kj}$ into one $T'_{kj}$ that again has the
form $\diamond$ and we are back to the case of $(a)$, but  with one
less elementary matrix to the right of $E^{-1}_{ij}$. If also $n_1=0$,
then we apply the algorithm:
\begin{eqnarray*}
 T_{kj} E^{-1}_{ij} E_{jk}E_{ij} R_{jki}\ldots 
&\go& T_{kj} E_{jk} E^{-1}_{ik}E^{-1}_{ij} E_{ij} R_{jki} \ldots \\
&\go& T_{kj} E_{jk} E^{-1}_{ik} R_{jki} \ldots.
\end{eqnarray*}
We combine $T_{kj} E_{jk}$ into $T'_{jk}$, and this brings us inductively to the
case $(b)$. There is also the possibility that in the sequence $(a)$
the group $H(j,k,i)$ occurring is the last partial group, and in that
group either the last symbol $E_{ij}$ or both $E_{jk}E_{ij}$ are
missing. In both cases the algorithm terminates and the form of $T_1$
can be read off from the formulas above.

Now consider sequence $(b)$:
\[ T_{jk} E^{-1}_{ik} R_{jki}(H(j,k,i)R_{jki})^r H_p(j,k,i) \go
T_{jk} R_{jki} E^{-1}_{ji} (E_{jk}E_{ji})^{m_1} E_{kj}^{n_1}
\ldots. \qquad (*)\]
When $m_1>0$, we apply three steps of the algorithm to get:
\[T_{jk} R_{jki} E^{-1}_{ji} (E_{jk}E_{ji})^{m_1} E_{kj}^{n_1} \ldots
\go T_{jk} E_{ij} R_{jki} (E_{jk}E_{ji})^{m_1-1} E_{kj}^{n_1}
\ldots.\]
Note that $T_{jk} E_{ij}$ is of the form $\diamond$ and it ends with a
complete group. Thus:
\begin{eqnarray*}
T_{jk} E_{ij} R_{jki} &=& E_{ik}^q E_{ki} (H(j,k,i)R_{jki})^r,\\
(E_{jk}E_{ji})^{m_1-1} E_{kj}^{n_1}\ldots &=& (H(j,k,i)R_{jki})^s
H_p(j,k,i).
\end{eqnarray*} 
Concatenating these sequences gives the $T_1$ as stated in the lemma.

When $m_1=0$ and $n_1>0$, we apply the algorithm to the sequence $(*)$
as follows:
\begin{eqnarray*}
 T_{jk} R_{jki} E^{-1}_{ji} E_{kj}^{n_1}\ldots 
&\go& T_{jk}R_{jki}E_{kj}E_{ki}E^{-1}_{ji} E_{kj}^{n_1-1}\ldots \\
&\go& T_{jk}E_{ji}E_{jk}E^{-1}_{ik} R_{jki} E_{kj}^{n_1-1}\ldots.
\end{eqnarray*}
Combining $T_{jk}E_{ji}E_{jk} = T'_{jk}$, we are inductively back to
sequence $(b)$.

Finally, when $m_1=n_1=0$, we factor the sequence $(*)$ as
\begin{eqnarray*}
T_{jk} R_{jki} E^{-1}_{ji} E_{jk}E_{ij} R_{jki}\ldots
&\go&T_{jk}R_{jki}E_{kj}E_{ik}R_{kij}E^{-1}_{ki}E^{-1}_{ij} E_{ij} R_{jki}\ldots\\
&\go&T_{jk}R_{jki}E_{kj}E_{ik}R_{kij}E^{-1}_{ki} R_{jki}\ldots\\
&\go&T_{jk}E_{ji}E_{kj}E^{-1}_{ki}\ldots.
\end{eqnarray*}
This brings us by induction to the sequence $(a)$. We should also
consider the case where either $E_{ij}$ or both $E_{jk}E_{ij}$ are
missing from the final partial group. Both these cases are easy to deal
with and lead to the required form of $T_1$.
\end{proof}

\begin{proof}[Proof of the theorem]

To factor $E_{ij}^{-m} E_{ik}^n$, we use induction on $m$. If all
factorizations of $E_{ij}^{-(m-1)} E_{ik}^n$ have the claimed form $T
S^{-1}$, it suffices to prove that all factorizations of $E_{ij}^{-1}
T$ have the same form. The base case $m=0$ is trivial.

When $T=E_{ik}^n$, we can either commute $E_{ij}^{-1}$ with $T$ using
rule $(6a)$, or we can commute the first $p$ steps, then apply rule
$(6b)$:
\begin{eqnarray*}
E_{ij}^{-1} E_{ik}^n &\go& E_{ik}^p \underline{E_{ij}^{-1}E_{ik}} E_{ik}^q\\
&\go& E_{ik}^pE_{ki}E_{jk}R_{kji}E_{kj}^{-1}\underline{E_{ji}^{-1}E_{ik}^q}\\
&\go& E_{ik}^pE_{ki}E_{jk}R_{kji}\underline{E_{kj}^{-1}E_{ik}^q}
E_{jk}^{-q} E_{ji}^{-1}\\ 
&\go&  E_{ik}^pE_{ki}E_{jk}\underline{R_{kji}(E_{ik}E_{ij})^q}
E_{kj}^{-1}E_{jk}^{-q} E_{ji}^{-1}\\ 
&\go& E_{ik}^p E_{ki}E_{jk}(E_{ji}E_{jk})^q R_{kji}
E_{kj}^{-1}E_{jk}^{-q} E_{ji}^{-1}.
\end{eqnarray*}
Note that $E_{jk}(E_{ji}E_{jk})^q = (E_{jk}E_{ji})^q E_{jk}$ is a
partial group $H_p(j,k,i)$, thus the result
\[ T' =  E_{ik}^p E_{ki}H_p(j,k,i)\]
is as required.

Now let us assume that $T$ is of the form $\diamond$ and consider
factorizations of $E_{ij}^{-1}T$:
\[ E_{ij}^{-1}T =  E_{ij}^{-1} E_{ik}^n E_{ki} (H(j,k,i)R_{jki})^r
H_p(j,k,i).\]
From the above we know how to factor $E_{ij}^{-1} E_{ik}^n$. First
suppose that the factorization is $E_{ik}^nE_{ij}^{-1}$. Then we
continue with the algorithm:
\[ E_{ik}^nE_{ij}^{-1} E_{ki} (H(j,k,i)R_{jki})^rH_p(j,k,i) \go
E_{ik}^n E_{ki} E_{kj}E_{ij}^{-1}(H(j,k,i)R_{jki})^rH_p(j,k,i).\]
Since the initial segment $E_{ik}^n E_{ki} E_{kj} = T'_{kj}$, where
$T'$ is of the form $\diamond$, we are reduced to case $(a)$ of the
lemma.

Next suppose that we do not commute $E_{ij}^{-1}$ with all of
$E_{ik}^n$:
\begin{eqnarray*}
E_{ij}^{-1}T &\go& E_{ik}^p E_{ki}E_{jk}(E_{ji}E_{jk})^q R_{kji}
E_{kj}^{-1}E_{jk}^{-q} \underline{E_{ji}^{-1} E_{ki}} H(j,k,i) \ldots \\
&\go& E_{ik}^p E_{ki}E_{jk}(E_{ji}E_{jk})^q R_{kji}
E_{kj}^{-1}\underline{E_{jk}^{-q}E_{ki}} E_{ji}^{-1} H(j,k,i) \ldots \\
&\go& E_{ik}^p E_{ki}E_{jk}(E_{ji}E_{jk})^q R_{kji}
E_{kj}^{-1}E_{ki} (E_{ji}^{-1}E_{jk}^{-1})^q E_{ji}^{-1} H(j,k,i) \ldots \\
&=& E_{ik}^p E_{ki}(E_{jk}E_{ji})^q E_{jk} R_{kji}
E_{kj}^{-1}E_{ki} E_{ji}^{-1}(E_{jk}^{-1} E_{ji}^{-1})^q
(E_{jk}E_{ji})^{m_1} E_{kj}^{n_1}\ldots  \quad (**)
\end{eqnarray*}
At the next step we cancel pairs $E_{jk}E_{ji}$ with pairs $E_{jk}^{-1}
E_{ji}^{-1}$. The number of such cancellations depends on $q$ and $m_1$.

When $q\leq m_1-1$, we get:
\begin{eqnarray*}
 (**) &\go& E_{ik}^p E_{ki}(E_{jk}E_{ji})^qE_{jk} R_{kji}
\underline{E_{kj}^{-1}E_{ki}} E_{jk}(E_{jk}E_{ji})^{m_1-1-q} E_{kj}^{n_1}\ldots\\
&\go& E_{ik}^p E_{ki}(E_{jk}E_{ji})^q E_{jk} \underline{R_{kji}
E_{ik}E_{ji}}R_{ijk}E_{ij}^{-1} \underline{E_{jk}^{-1}
  E_{jk}}(E_{jk}E_{ji})^{m_1-1-q} E_{kj}^{n_1}\ldots \\
&\go& E_{ik}^p E_{ki}(E_{jk}E_{ji})^qE_{jk} 
E_{ji}E_{kj}E_{ij}^{-1}(E_{jk}E_{ji})^{m_1-1-q} E_{kj}^{n_1}\ldots 
\end{eqnarray*}
Here 
\[T'=E_{ik}^p E_{ki}(E_{jk}E_{ji})^qE_{jk} E_{ji}E_{kj} = E_{ik}^p
  E_{ki} (E_{jk}E_{ji})^{q+1}E_{kj} \]
is of the form $\diamond$ and we are in case $(a)$ of the lemma.

When $q=m_1$ and $n_1>0$, then we get
\begin{eqnarray*}
  (**) &\go& E_{ik}^p E_{ki}(E_{jk}E_{ji})^qE_{jk} R_{kji}
E_{kj}^{-1}E_{ki} \underline{E_{ji}^{-1} E_{kj}^{n_1}}\ldots\\
&\go& E_{ik}^p E_{ki}(E_{jk}E_{ji})^qE_{jk} R_{kji}
\underline{E_{kj}^{-1}E_{ki}} E_{kj}E_{ki} E_{ji}^{-1} E_{kj}^{n_1-1}\ldots\\
&\go& E_{ik}^p E_{ki}(E_{jk}E_{ji})^qE_{jk} R_{kji}
E_{ki} \underline{E_{kj}^{-1} E_{kj}} E_{ki} E_{ji}^{-1} E_{kj}^{n_1-1}\ldots\\
&\go& E_{ik}^p E_{ki}(E_{jk}E_{ji})^qE_{jk} \underline{R_{kji}
E_{ki}} E_{ki} E_{ji}^{-1} E_{kj}^{n_1-1}\ldots\\
&\go& E_{ik}^p E_{ki}(E_{jk}E_{ji})^qE_{jk} E_{ij} R_{kji} E_{ki}
    E_{ji}^{-1} E_{kj}^{n_1-1}\ldots.
\end{eqnarray*}
In the last sequence we use the fact that $R_{kji} = R_{jki}^2$ and
continue:
\begin{eqnarray*} & & E_{ik}^p E_{ki}(E_{jk}E_{ji})^qE_{jk} E_{ij} R_{jki} \underline{R_{jki} E_{ki}
    E_{ji}^{-1}} E_{kj}^{n_1-1}\ldots \\
&\go& E_{ik}^p E_{ki}(E_{jk}E_{ji})^qE_{jk} E_{ij} R_{jki} E_{jk}
    E_{ik}^{-1}R_{jki} E_{kj}^{n_1-1}\ldots.
\end{eqnarray*}
This expression falls into case $(b)$ of the lemma.

When $q=m_1$ and $n_1=0$, then we get
\begin{eqnarray*}
  (**) &\go& E_{ik}^p E_{ki}(E_{jk}E_{ji})^qE_{jk} R_{kji}
E_{kj}^{-1}E_{ki} \underline{E_{ji}^{-1} E_{jk}} E_{ij} \ldots\\
&\go& E_{ik}^p E_{ki}(E_{jk}E_{ji})^qE_{jk} R_{kji}
\underline{E_{kj}^{-1}E_{ki}  E_{kj}} E_{ik} R_{kij}E_{ki}^{-1}\underline{E_{ij}^{-1}E_{ij}} \ldots\\
&\go& E_{ik}^p E_{ki}(E_{jk}E_{ji})^qE_{jk} \underline{R_{kji}
E_{ki}} E_{ik} R_{kij}E_{ki}^{-1} \ldots\\
&\go& E_{ik}^p E_{ki}(E_{jk}E_{ji})^qE_{jk} E_{ij} R_{jki} \underline{R_{jki}
E_{ik} R_{kij} E_{ki}^{-1} R_{jki}}H(j,k,i) \ldots\\
&\go& E_{ik}^p E_{ki}(E_{jk}E_{ji})^qE_{jk} E_{ij} R_{jki}
E_{kj} E_{ij}^{-1}H(j,k,i) \ldots
\end{eqnarray*}
This is the sequence $(a)$ in the lemma. We also have to consider the
case where the final $E_{ij}$ or both $E_{jk} E_{ij}$ are missing, but
these cases are simple and left to the reader.

When $q>m_1$, we get
\[
  (**) \go
E_{ik}^p E_{ki}E_{jk}(E_{ji}E_{jk})^q R_{kji}
E_{kj}^{-1}E_{ki} (E_{ji}^{-1}E_{jk}^{-1})^{q-m_1} E_{ji}^{-1}
E_{kj}^{n_1} \ldots \]
When $n_1>0$, this sequence stops with $E_{jk}^{-1}E_{kj}$. When
$n_1=0$, the sequence 
\begin{eqnarray*}
  (**) &\go&
E_{ik}^p E_{ki}E_{jk}(E_{ji}E_{jk})^q R_{kji}
E_{kj}^{-1}E_{ki} (E_{ji}^{-1}E_{jk}^{-1})^{q-m_1} \underline{E_{ji}^{-1}
E_{jk}}E_{ij} \ldots \\
&\go&
E_{ik}^p E_{ki}E_{jk}(E_{ji}E_{jk})^q R_{kji}
E_{kj}^{-1}E_{ki} \underline{(E_{ji}^{-1} E_{jk}^{-1})^{q-m_1}
E_{kj}} E_{ik} R_{kij} E_{ki}^{-1} E_{ij}^{-1} E_{ij} \ldots 
\end{eqnarray*}
also stops with $E_{jk}^{-1}E_{kj}$. The cases where either the final
$E_{ij}$ or both $E_{jk} E_{ij}$ are missing are left to the reader.
\end{proof}

\subsection{A global finiteness result}

We prove here finiteness of the global algorithm $A$ in a special
case discussed in the introduction.

Consider two sequences of global star subdivisions of a single cone
$\langle v_i, v_j, v_k\rangle$. The subdivivision rays in one sequence
are generated by $v_i+v_j, 2v_i+v_j,\ldots, mv_i+v_j$, and in the other
sequence by $v_i+v_k, 2v_i+v_k,\ldots, nv_i+v_k$. To prove that the
algorithm is finite when applied to this sequence of $m$ star
assemblies followed by $n$ star subdivisions, we follow the notation
in the proof of Proposition~\ref{prop-local-global}. To prove finiteness of the
global algorithm, it suffices to prove finiteness of the local
algorithm when applied to all local subsequences of the original
global sequence. The local subsequences of the $m$ star assemblies -
$n$ star subdivisions are:
\begin{gather*}
E_{ij}^{-m} E_{ik}^n, \\
E_{ij}^{-m} E_{ik}^q E_{ki}, \\
E_{ji}^{-1}E_{ij}^{-p} E_{ik}^n,\\
E_{ji}^{-1}E_{ij}^{-p}  E_{ik}^q E_{ki},
\end{gather*}
where $0\leq p < m$ and $0\leq q<n$. Finiteness of the local algorithm
when applied to the first sequence was proved in the previous
subsection. The proof of Thorem~\ref{thm-main} also covers the case of
the second sequence because $T=E_{ik}^q E_{ki}$ is of the form
$\diamond$ and we proved that for any such $T$, the factorizations of
$E_{ij}^{-m} T$ are finite. The case of the third sequence follows by
symmetry. Only the last case is remaining. Using the proof of the
theorem, we can factor
\[ E_{ji}^{-1}E_{ij}^{-p}  E_{ik}^q E_{ki} \go E_{ji}^{-1} T S^{-1},\]
where $T$ is of the form $\diamond$. Thus it suffices to prove that
all factorizations of $E_{ji}^{-1} T$ are finite. The resulting
factorizations may not follow the same pattern as in the theorem.

We apply the algorithm as follows:
\begin{eqnarray*}
E_{ji}^{-1} T &=& \underline{E_{ji}^{-1} E_{ik}^q} E_{ki}
H(j,k,i) \ldots \\
&\go& E_{ik}^q E_{jk}^{-q}\underline{E_{ji}^{-1}E_{ki}}H(j,k,i)\ldots \\
&\go& E_{ik}^q \underline{E_{jk}^{-q}E_{ki}}E_{ji}^{-1}H(j,k,i)\ldots \\
&\go& E_{ik}^q E_{ki}
(E_{ji}^{-1}E_{jk}^{-1})^qE_{ji}^{-1}(E_{jk}E_{ji})^{m_1} E_{kj}^{n_1}
\ldots .
\end{eqnarray*}
The part of the sequence that needs to be factored:
\[ (E_{ji}^{-1}E_{jk}^{-1})^qE_{ji}^{-1}(E_{jk}E_{ji})^{m_1}
E_{kj}^{n_1}\ldots =E_{ji}^{-1}(E_{jk}^{-1}E_{ji}^{-1})^q(E_{jk}E_{ji})^{m_1}
E_{kj}^{n_1}\ldots  
\]
also appears as a part of the sequence $(**)$ in the proof of
Thorem~\ref{thm-main}. We know that all factorizations of $(**)$ are
finite. This does not directly imply that all factorizations of the
current sequence are finite. One can, however, repeat the proof of
finiteness of the algorithm on $(**)$, adjusting the steps where
necessary for the current sequence.
 
To finish the discussion of the global case, note that in the common
refinement of the two sequences of global star subdivisions, the
number of maximal cones is equal to the number of different
factorizations of the four types of local sequences above. From
Figure~\ref{table-number} we know that this number is very large when
$m$ and $n$ are large. Since each star subdivision of a fan increases
its number of maximal cones by one or two, it also follows that the
number of star subdivisions and star assemblies in the strong
factorization of the original $m$-by-$n$ sequence is very large.

\section{Further directions}

To prove finiteness of the local algorithm $A$, one needs to find a
way to bound the complexity of the factorizations, for example their
length. The proofs of finiteness presented here do not construct such a
bound. Instead, we considered very regular initial sequences and
proved that the factorizations then are also regular. The proof of
Theorem~\ref{thm-main} for example does not give a bound on the length
of factorizations of $E_{ij}^{-m} E_{ik}^n$. In computer experiments
one can see that the factorizations in these cases are in fact rather
short. If a 
factorization has the form $T S^{-1}$, then the number of elementary
matrices in $T$ is no more than $\max\{2m+n, m+2n\}$. We discuss in
this section two approaches that may lead to such bounds on complexity
and to a proof of finiteness.

\subsection{The Cayley graph}

It is well-known that the elementary matrices $E_{ij}$ generate the
group $SL(3,\ZZ)$. Construct the Cayley graph of $SL(3,\ZZ)$ using
these generators. This graph has its vertices as the matrices in
$SL(3,\ZZ)$ and there is an edge from $X$ to $Y$ if $Y=X E_{ij}$ for
some $i,j$. If we represent cones by matrices, then a sequence of
local star subdivisions is simply a path in this graph. To make the
representation independent of the chosen order of generators for a
cone, we should really consider the quotient of the Cayley graph by
the alternating group $A_3$ that acts by cyclically permutating
columns of matrices. Then all factorization diagrams, such as the
example in Figure~\ref{fig-local-example}, can be thought of as being
subgraphs of the quotient graph.

Cayley graphs of groups such as $SL(3,\ZZ)$ are studied in
combinatorial group theory. There are many techniques and results
known that are similar to our problem. For example, in studying the
isoperimetric 
inequalities for the Cayley graph in the word metric, one starts with a 
closed loop and asks how many relations are needed to contract this
loop to a point. This can be compared to the factorization problem as
follows: if we start with a partial factorization diagram, which is a
loop in the graph, we seek to expand this loop by applying the
relations. It may be possible to apply techniques from combinatorial
group theory to get invariants for the factorization problem.

\subsection{Factorization along a valuation}

Define a valuation as a ray in $\RR^3$ generated by a vector $v$ with
rationally independent coordinates. In the local algorithm the
valuation ray tells us which cone to choose after a star subdivision
-- we always choose the cone containing the ray. Using the local
algorithm when the choice of cones is given by a valuation ray is
called factorization along a valuation. 

One can pose the following conjecture:

\begin{conjecture} Algorithm $A$ is finite along any valuation
\end{conjecture}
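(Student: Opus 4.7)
The plan is first to exploit the fact that fixing a valuation removes the only source of nondeterminism in algorithm $A$, and then to search for a monovariant built from the barycentric coordinates of the valuation vector in the selected cones.

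I would first observe that along a valuation $v$ the choice between rules $(6a)$ and $(6b)$ is forced: these two rules produce different intermediate cones, and since $v$ has rationally independent coordinates it lies in the interior of exactly one of the two candidate cones. The valuation therefore selects a unique rule at every step, the algorithm becomes a single deterministic trajectory, and finiteness reduces to exhibiting some function of the current sequence that is bounded below and strictly decreases at every step.

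The natural candidate for such a function comes from the Euclidean algorithm. For each cone $\sigma = \langle w_1, w_2, w_3\rangle$ appearing in the current sequence, write $v = a^\sigma_1 w_1 + a^\sigma_2 w_2 + a^\sigma_3 w_3$ with $a^\sigma_i > 0$. A smooth star subdivision at $w_i + w_j$ into the subcone containing $v$ replaces the pair $(a^\sigma_i, a^\sigma_j)$ by $(a^\sigma_i - a^\sigma_j, a^\sigma_j)$ when $a^\sigma_i > a^\sigma_j$, which is precisely one slow Euclidean step on the two coordinates. I would attempt to define a complexity $c(\sigma) = f(a^\sigma_1, a^\sigma_2, a^\sigma_3)$ for a suitable concave symmetric $f$, sum it over the cones of the sequence with position-dependent weights, and try to show strict decrease under each of the rewrite rules.

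The hard part will be the rules that lengthen the sequence, namely $(4)$, $(5)$, and especially $(6b)$, which replaces a subsequence of length two by one of length five. Any naive length-based or flat-sum monovariant will grow under these moves, so the correct invariant must heavily penalize cones whose coordinates $a^\sigma_i$ are large, so that the newly introduced cones (being deeper in the Euclidean expansion of $v$) contribute less than what they replace. A sharp test case is provided by Theorem~\ref{thm-main}: the partial groups $H_p(j,k,i)$ and the form $\diamond$ describe exactly how cones on the subdivision side grow in the controlled situation $E_{ij}^{-m} E_{ik}^n$, and matching these explicit patterns against the continued-fraction description of $v$ should either reveal the right monovariant or expose an obstruction to its existence. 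If a monovariant succeeds in that family, I would hope to reduce the general case by splitting an arbitrary initial sequence into directed pieces via Propositions~\ref{prop-directedij} and~\ref{prop-directedii}, with the valuation supplying the compatibility needed to glue the local bounds into a global one.
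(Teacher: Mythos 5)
The statement you are addressing is posed in the paper as an open conjecture in the ``Further directions'' section; the paper contains no proof of it (indeed it even leaves open the easier-sounding question of whether finiteness along valuations would imply local or global finiteness). Your text is likewise not a proof but a research plan, and the gap is exactly where you yourself locate it: the monovariant is never constructed. Your opening observation is fine and consistent with the paper's setup --- along a valuation ray with rationally independent coordinates the cone containing the ray is unique, so the choice between rules $(6a)$ and $(6b)$ is forced and the trajectory is deterministic, and a subdivision step does act on the barycentric coordinates as a slow Euclidean step. But from there on every essential step is conditional: you ``would attempt to define'' a complexity $c(\sigma)=f(a^\sigma_1,a^\sigma_2,a^\sigma_3)$, and you concede that the length-increasing rules $(4)$, $(5)$ and especially $(6b)$ defeat any naive choice of $f$ or of the position-dependent weights, without exhibiting a choice that survives them. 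Since producing such a decreasing quantity is precisely the open problem (the paper says explicitly that its finiteness proofs construct no complexity bound), nothing has been proved.

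Two further points would need repair even if a candidate $f$ were found. First, your termination criterion is logically insufficient: the coordinates $a^\sigma_i$ are irrational reals, so a real-valued quantity that is ``bounded below and strictly decreases at every step'' does not force the process to stop; you would need decrease by a fixed positive amount, values in a well-ordered set, or some lattice-theoretic integrality (e.g.\ working with the dual integral linear functionals rather than the real coordinates of $v$). Second, the proposed reduction of the general case via Propositions~\ref{prop-directedij} and~\ref{prop-directedii} does not apply as stated: Proposition~\ref{prop-directedii} and its corollary assume rule $(6b)$ is never used, whereas along a valuation $(6b)$ is sometimes the forced choice, and Theorem~\ref{thm-main} controls only the very special sequences $E_{ij}^{-m}E_{ik}^n$ (and small perturbations of them treated in the global finiteness subsection); there is no argument here that an arbitrary initial sequence, even along a valuation, splits into pieces to which these results apply or that the resulting ``local bounds'' could be glued.
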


As in the local case, we also have the following problem: does
finiteness of the algorithm along any valuation imply finitness of the
local or global algorithm? 

The factorization algorithm along a valuation is easier to visualize
than the global or the local algorithm. If a cone
$\langle v_1, v_2, v_3\rangle$ contains the vector $v$, write 
\[ v = b_1 v_1 + b_2 v_2 + b_3 v_3\]
for some positive numbers $b_1, b_2, b_3$. We can then represent the
cone by the vector $(b_1,b_2,b_3)$ in $\RR^3$. A local star subdivision of
this cone along the valuation corresponds to
subtracting $b_i$ from $b_j$ for some $i\neq j$, for example
\[ (b_1, b_2-b_1, b_3) \to (b_1,b_2,b_3)\]
is one star subdivision, provided that $b_2>b_1$. Using this
representation of cones as points in $\RR^3$, one can also consider
embedding a factorization diagram in $\RR^3$. As before, to make this
independent of the order of generators, the triple $(b_1,b_2,b_3)$
should be considered up to cyclic permutation. Because of this cyclic
permutation, the ambient space $\RR^3/A_3$ becomes more complicated to
work with. In $\RR^3$ or in the quotient $\RR^3/A_3$ where 
one can measure lengths and distances, it may be possible to
find numerical invariants that bound the complexity of the algorithm.

We also remark that the local algorithm $B$ does not need the cyclic
permutations, hence a factorization diagram can be embedded in
$\RR^3$. However, it is not hard to find a valuation and a diagram
such that the factorization algorithm $B$ along the valuation is not
finite. (The example in Figure~\ref{fig-example}(b) is actually finite
along any valuation, but if one takes a symmetric $2$-by-$2$ initial
sequence, then there are many valuations along which the algorithm is
not finite.)

\end{document}